\newtheorem{theorem}{Theorem}[section]
\newtheorem{proposition}[theorem]{Proposition}
\newtheorem{lemma}[theorem]{Lemma}
\newtheorem{corollary}[theorem]{Corollary}
\newtheorem{example}[theorem]{Example}
\numberwithin{equation}{section}
\def\Pf{\smallskip\goodbreak{\sl Proof. }}
\def\Fp{\vadjust{}\penalty200 \hfill
\lower.3333ex\hbox{\vbox{\hrule\hbox{\vrule\phantom{\vrule height
6.83333pt depth 1.94444pt width 8.77777pt}\vrule}\hrule}}
\ifmmode\let\next\relax\else\let\next\par\fi \next}
\def\End{\mathop{\rm End}\nolimits}
\def\id{\mathop{\rm id}\nolimits}
\def\Hom{\mathop{\rm Hom}\nolimits}
\def\cf{\mathop{\rm cf}\nolimits}
\def\Ker{\mathop{\rm Ker}\nolimits}
\def\Coker{\mathop{\rm Coker}\nolimits}
\def\Br{\mathop{\rm Br}\nolimits}
\def\dom{\mathop{\rm Dom}\nolimits}
\def\rk{\mathop{\rm rk}}
\def\Z{{\mathbb Z}}
\def\bS{{\mathbb S}}
\def\a{\alpha}
\def\k{\kappa}
\def\hv{\widehat\va}
\def\va{\varphi}
\def\l{\lambda}
\def\id{{\rm id\,}}
\def\sup{{\rm sup\,}}
\def\Im{{\rm Im\,}}
\def\Dom{{\rm Dom}\,}
\def\aln{{\aleph_0}}
\def\Cont{2^{\aln}}
\def\n+1d{{}^{ n+1 \downarrow }\l}
\def\size#1{\left|\,#1\,\right|}
\def\Rhat{\widehat R}
\def\Khat{\widehat K}
\def\Fhat{\widehat F}
\def\Chat{\widehat C}
\def\Hhat{\widehat H}
\def\Ehat{\widehat E    }
\def\hv{\widehat{\varphi}}
\def\hp{\widehat{\psi}}
\def\hb{\widehat b}
\def\hH{\widehat H}
\def\hB{\widehat B}
\def\hE{\widehat E}
\def\restr{\mathop{\upharpoonright}}
\def\to{\rightarrow}
\def\arr{\longrightarrow}
\def\iff{\Longleftrightarrow}
\def\Rhat{\widehat{R}}
\begin{document}

\title{\bf Cellular covers of cotorsion-free modules}
\footnotetext{The first and third authors were supported by the
project No. 963-98.6/2007 of the German-Israeli
Foundation for Scientific Research \& Development and the project No. AOBJ 548025 of the German Research Foundation.

The second author was supported by the Spanish Ministry of
Education and Science MEC-FEDER grant MTM2007-63277.

Subject classification (2000):\\
Primary: 20K20, 20K30;  Secondary: 16S60, 16W20.\\
Key words and phrases: cellular cover, co-localization, cotorsion-free,  abelian group,
Shelah's Black Box}

\author{ R\"udiger G\"obel, Jos\'e L. Rodr\'{\i}guez, Lutz Str\"ungmann}
\date{September 16th, 2009}
\maketitle

\begin{abstract}
In this paper we improve recent results dealing with cellular
covers of $R$-modules. Cellular covers (sometimes called co-localizations) 
come up in the context of homotopical localization of topological
spaces. They are related to idempotent cotriples, idempotent comonads or
coreflectors in category theory.

Recall that a homomorphism of $R$-modules $\pi: G\to H$ is called
a {\it cellular cover} over $H$ if $\pi$ induces an isomorphism
$\pi_*: \Hom_R(G,G)\cong \Hom_R(G,H),$ where $\pi_*(\varphi)=
\pi \varphi$ for each $\varphi \in \Hom_R(G,G)$ (where maps are
acting on the left). On the one hand, we show that every
cotorsion-free $R$-module of rank $\kappa<\Cont$ is realizable as
the kernel of some cellular cover $G\to H$ where the rank of $G$ is $3\kappa +1$ (or 3, if
$\kappa=1$). The proof is based on Corner's classical idea of how
to construct torsion-free abelian groups with prescribed countable
endomorphism rings. This complements results by Buckner--Dugas
\cite{BD}. On the other hand, we prove that every cotorsion-free
$R$-module $H$ that satisfies some rigid conditions admits arbitrarily large
cellular covers $G\to H$. This improves results by Fuchs--G\"obel \cite{FG} and
Farjoun--G\"obel--Segev--Shelah \cite{FGSS07}.

\end{abstract}

\section{Introduction}\label{introduction}
Cellular covers of groups and modules are the
algebraic analogues of the cellular
approximations of topological spaces due to J. H. C.
Whitehead. These feed into the context
of homotopical localization in closed model categories
established by Bousfield, Farjoun, Hirschhorn, and others
(see e.g. \cite{Bo77}, \cite{Bo97},
\cite{Cha96}, \cite{F97}, \cite{Hir02}, \cite{Nof99}).
In some special cases there is even a good interplay between
cellularization of spaces and cellularization of groups
via the fundamental group \cite{RS01}, as was previously obtained
for localizations in \cite{Cas95}, \cite{Bo97}, \cite{Cas00}, \cite{CRT98}.
For instance, the universal central extension 
$0\to H_2(H;\Z)\to  \widetilde H\to H \to 1$
of a perfect group $H$ yields a surjective cellular cover, with kernel 
the Schur multiplier $H_2(H;\Z)$.  This central extension is the one
induced on the lowest homotopy groups of the 
fiber sequence $AX\to X\to X^+$, where $X\to X^+$ is the Quillen plus-construction, 
$AX\to X$ is the acyclic cellular approximation, and $X=K(H,1)$ is the Eilenberg--Mac Lane
space with fundamental group $H$; see \cite{RS01}.
Other motivating examples can be found in
\cite{RS01}, \cite{MP01}, \cite{FGS07}, \cite{Flo}, \cite{RS07}.

Recall that a homomorphism $\pi:G \to H$ of groups
is a {\it cellular cover} over $H$,
if every homomorphism $\varphi: G\to H$ lifts uniquely
to an endomorphism $\widetilde\varphi$ of $G$
such that $\pi \widetilde\varphi = \varphi$.
In such case $\pi: G\to \Im(\pi)$ is a cellular cover over $\Im(\pi)$, 
hence one can assume without loss of generality that
$\pi: G\to H$ is an epimorphism. We then say that
\begin{equation}
\label{celular-sequence}
0\to K \to G\overset{\pi}{\to} H\to 1
\end{equation}
is a {\it cellular exact sequence}.

One of the main objectives is to
classify (up to isomorphism) all possible cellular exact sequences
with either fixing the cokernel $H$ or the kernel $K$.
It is then crucial to know whether there is a set or a proper class
(up to isomorphism) of cellular exact sequences (\ref{celular-sequence})
for a fixed $K$ or $H$. Certainly, it is more desirable
to find cellular covers of any given cardinal $\lambda\geq |K|$ or $|H|$.
Here we are guided by similar results obtained for localizations;
see e.g. \cite{Cas00}, \cite{D04}, \cite{D07},
\cite{DMV87}, \cite{GRS02}, \cite{GS02}, \cite{Li00}.

First observe that $K$ must be central in $G$,
and conversely every abelian group $K$ is the Schur
multiplier $H_2(H;\Z)$ of some perfect group $H$,
thus all abelian groups can appear
as kernels of cellular covers \cite{FGS07}.
It has been also proved in \cite{FGS07} that
$G$ is abelian, nilpotent, or an $R$-module, whenever $H$ is abelian,
nilpotent or an $R$-module, respectively, where $R$ is any commutative ring with unit 
(compare with the case of localizations \cite{Li00}, \cite{CRT98}).
Of course, other properties like for example being perfect  (see \cite{Se08})
are not transferred in general (cf. \cite{RSV05}). 
Further considerations on cellular covers of arbitrary
groups have been recently achieved in \cite{CDFS08} and 
\cite{FGSS07}.


Recall some known results for cellular covers of abelian groups.
If $H$ is divisible, then $G$ in (\ref{celular-sequence})
 can be determined explicitly as shown in
\cite{CFGS07}. A different proof of this result using Maltis
duality theory is given in \cite{FG}. If $H$ is reduced, then $K$
must be cotorsion-free, see \cite{BD}, \cite{FGSS07}, \cite{FG}.
And if $H$ is torsion and reduced, then the cellular exact sequence
collapses and $K=0$, see \cite{FG}. Furthermore, if $K$ is free, then it is
very easy to see (\cite{FGSS07}) that $|K|\leq |H|$. 

The situation becomes more exciting for cotorsion-free abelian groups.
Buckner and Dugas showed in \cite{BD} that if K is cotorsion-free, 
then there exist arbitrarily large cellular covers $G\arr H$ with kernel $K$. 
Hence (\ref{celular-sequence}) runs over a proper class in that case.
By their construction it follows $|G|\ge 2^{\aleph_0}$.
Here \cite{BD} uses a construction based on the combinatorial
principle Strong Black Box from \cite{GW} which we will replace by
the ordinary Black Box, thus filling in missing cardinals
$\k^{\aln}$ for the size of the kernels, see Corollary
\ref{bd-paper}. However, due to the nature of the Black Box this
does not say anything about cellular covers of size below the
continuum - one problem that we want to attack in the present
paper. We would like to point out that these Black Boxes are theorems in ZFC,
thus do not depend on additional axioms of set theory; see for instance \cite{GT}.

Dually, for every infinite cardinal $\lambda$ there exists a
cotorsion-free abelian group $H$ of cardinal $\lambda$, which
admits arbitrary large cellular covers $G$'s (see \cite{FGSS07}).
The proof is based on \cite{GM90} concerning the existence of
arbitrarily large indecomposable vector spaces with four
distinguished subspaces. For instance every rank one group that is
not a ring has arbitrarily large cellular covers (see \cite{FG}).
Note that this result does not fix the group $H$, but the cardinal
$\lambda$.

In the present paper we show the following new contributions to
the theory of cellular covers: As indicated above we consider the
existence of cellular covers of size below the continuum. 
In this case cotorsion-free is the same as torsion-free and reduced; 
see \cite{GT}. And if  $K$
is cotorsion-free of rank $\k <\Cont$, then we prove that there
exists a cellular cover $G$ of rank $3$, $3\k +1$, or $\k$,
respectively if $\k$ is 1, finite and greater or equal than 2, or
infinite; see Theorem~\ref{crucial-kernels}. This explains our
interest in extending the main result of \cite{BD} to Corollary
\ref{bd-paper} mentioned above. Dually, if $H$ is cotorsion-free
of size $\k<\Cont$ and $\End(H)=\Z$, then
there exists a cellular cover $G$ of size $\k$, see
Theorem~\ref{crucial-quotients}. Looking at cokernels of size larger than
the continuum  we are also able to find arbitrarily large cellular
sequences with prescribed cokernel. This is our main result
(Theorem \ref{main2}): If $H$ is
cotorsion-free of size $\k\geq \Cont$ and satisfies
$\End(H)=\Z$ and $\Hom(H,M)=0$ for all
$\aleph_0$-free abelian groups $M$,
then there exist arbitrarily large cellular covers $G$.
To get this result we have to modify the classical Black Box
to be suitable for this purpose. 

It is needless to say that our results hold for $R$-modules, and
are stated in broader generality as indicated here (see Section 2).

We finally remark that cellular covers of groups provide
(singly cogenerated) colocalization functors in the category of groups 
as noticed in \cite{FGS07}.
In particular, they can be translated to spaces
by simply taking Eilenberg--Mac Lane spaces
as in \cite{RS01} or \cite{GRS02}. That is, if $G\to H$ is a surjective cellular
cover of groups then $K(G,n) \to K(H,n)$ is a cellular approximation of spaces
(assumed $H$ abelian for $n\geq 2$). 

\bigskip
\noindent
{\it Acknowledgments:} The second author would like to thank the University of
Duisburg-Essen for its hospitality during his visit in
Summer 2008.

\section{Cellular covers of modules}

A homomorphism of $R$-modules $\pi: G\to H$ is called  a {\it
cellular cover} over $H$ if $\pi$ induces an isomorphism
$$\pi_*: \Hom_R(G,G)\cong \Hom_R(G,H),$$
where $\pi_*(\varphi)= \pi \varphi$ for each $\varphi \in
\Hom_R(G,G)$ (where maps are acting on the left). For $R=\Z$ these
are precisely cellular covers (or co-localizations) of abelian
groups (see e.g. \cite{FGS07}, \cite{FG}).
Recall that $\pi: G\to H$ is a {\it localization} if it induces an isomorphism
 $\pi^*: \Hom_R(H,H) \cong \Hom_R(G,H)$, by $\pi^*(\varphi)= \varphi \pi$.

If $\pi: G\to H$ is a cellular cover of $R$-modules, then $\pi$
induces a morphism $\End_{R}(H)\to \End_{R}(G)$, given by
$\varphi\mapsto \tilde\varphi$, where $\tilde\varphi:G\to G$ is the
unique lifting of $\varphi$, i.e. such that $\pi \tilde \varphi =
\varphi \pi$. In fact, it is a homomorphism of $R$-algebras, also by
uniqueness of liftings. The first part of the following
result can be found in \cite{FG}.

\begin{proposition}
\label{endo} Let $\eta: R_0\to R$ be a homomorphism of rings, $H$
be an $R$-module and $\pi: G\to H$ a cellular cover as
$R_0$-modules. Then $G$ admits a unique $R$-module structure for
which  $\pi: G\to H$ is a morphism of $R$-modules. Furthermore,
$\pi$ is also a cellular cover viewed as $R$-modules, if $\eta (R_0)$ is
central in $R$.
\end{proposition}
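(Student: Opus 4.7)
The plan is to transfer the $R$-module structure on $H$ to $G$ by lifting multiplication maps. For each $r \in R$, consider the map $\mu_r \colon G \to H$ given by $\mu_r(g) = r\pi(g)$. Assuming $\eta(R_0)$ is central in $R$ (the hypothesis stated for the second part, and also needed for the first), $\mu_r$ is $R_0$-linear because
\[
\mu_r(r_0 g) \;=\; r\,\pi(r_0 g) \;=\; r\,\eta(r_0)\,\pi(g) \;=\; \eta(r_0)\,r\,\pi(g) \;=\; r_0 \,\mu_r(g).
\]
The cellular cover property then yields a unique $\tilde\mu_r \in \End_{R_0}(G)$ with $\pi\tilde\mu_r = \mu_r$, and I would define the $R$-action on $G$ by $r\cdot g := \tilde\mu_r(g)$. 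By construction $\pi$ is $R$-linear.

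Verification of the module axioms reduces to uniqueness of liftings. The identities $\tilde\mu_{r+s} = \tilde\mu_r + \tilde\mu_s$, $\tilde\mu_{rs} = \tilde\mu_r\tilde\mu_s$, and $\tilde\mu_1 = \id_G$ all hold because both sides are $R_0$-endomorphisms of $G$ whose images under $\pi_*$ coincide, and $\pi_*$ is injective on $\End_{R_0}(G)$ by hypothesis. Uniqueness of the $R$-module structure follows in the same spirit: any other $R$-module structure making $\pi$ an $R$-morphism would, for each $r$, provide an $R_0$-endomorphism of $G$ that lifts $\mu_r$, hence must coincide with $\tilde\mu_r$.

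For the second part let $\varphi\colon G \to H$ be $R$-linear; since it is in particular $R_0$-linear there is a unique $R_0$-linear lift $\tilde\varphi\colon G \to G$ with $\pi\tilde\varphi = \varphi$, and the remaining step is to show $\tilde\varphi$ is $R$-linear. I would compare the two $R_0$-endomorphisms $\tilde\varphi\tilde\mu_r$ and $\tilde\mu_r\tilde\varphi$ of $G$: applying $\pi$ yields
\[
\pi\bigl(\tilde\varphi\tilde\mu_r(g)\bigr) \;=\; \varphi(rg) \;=\; r\varphi(g) \;=\; r\,\pi\tilde\varphi(g) \;=\; \pi\bigl(\tilde\mu_r\tilde\varphi(g)\bigr),
\]
so the two endomorphisms agree by injectivity of $\pi_*$ on $\End_{R_0}(G)$. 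Hence $\tilde\varphi(rg) = r\tilde\varphi(g)$, so $\tilde\varphi \in \End_R(G)$. Bijectivity of $\pi_*\colon \End_R(G) \to \Hom_R(G,H)$ now follows: surjectivity is exactly what was just produced, and injectivity is inherited from the bijection for $R_0$.

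The only delicate point is the role of centrality of $\eta(R_0)$: without it the candidate $\mu_r$ fails to be $R_0$-linear, so the cellular cover property of $\pi$ cannot be applied to produce the scalar action in the first place. Once that hypothesis is in force everything else is a routine application of the universal property $\pi_*\colon \End_{R_0}(G) \cong \Hom_{R_0}(G,H)$, used both for existence (to construct $\tilde\mu_r$ and $\tilde\varphi$) and for uniqueness (to identify pairs of endomorphisms whose composites with $\pi$ agree).
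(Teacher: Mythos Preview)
Your argument is correct and, for the second assertion, is essentially the paper's own: both verify that the unique $R_0$-lift $\tilde\varphi$ of an $R$-map $\varphi$ commutes with left multiplication by each $r\in R$ by comparing the two composites after applying $\pi$ and invoking injectivity of $\pi_*$ on $\End_{R_0}(G)$. The paper does not prove the first assertion, deferring instead to \cite{FG}; you supply that construction explicitly and correctly observe that producing the $R$-action via lifts of the maps $\mu_r$ already uses centrality of $\eta(R_0)$ (so that each $\mu_r$ is $R_0$-linear), a hypothesis the statement as written attaches only to the second clause.
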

\begin{proof}
The last observation holds as follows. If $\varphi: G\to H$ is an
$R$-homomorphism, then it is an $R_0$-homomorphism (via $\pi$),
hence it lifts to a unique $R_0$-homomorphism $\widetilde \varphi:
G\to G$ such that $\pi \widetilde \varphi = \varphi$. For a fixed
$r\in R$ we have that left multiplication by $r$ on $G$ is a
$R_0$-homomorphism since $\eta (R_0)$ is central in $R$.
Therefore, $r \widetilde \varphi$ and $\widetilde \varphi r$ are
two $R_0$-homomorphisms such that $\pi r \widetilde \varphi = \pi
\widetilde \varphi r$ both equal $r \pi$, since $\pi$ is a
$R$-homomorphism. Because $\pi$ is a $R_0$-cellular cover it
follows that $r \widetilde \varphi=\widetilde \varphi r$, and
hence $\widetilde \varphi$ is an $R$-homomorphism.
\end{proof}
In particular, for $R_0=\Z$ and $R$ a commutative ring with unit,
we have that cellular  covers $G\to H$ as
abelian groups over an $R$-module $H$ are also cellular covers as
$R$-modules (see  Proposition 2.6 in \cite{FG}).

The following easy observation allows us to 
consider surjective cellular covers:
\begin{proposition}
\label{surjective-cover}
A homomorphism of $R$-modules $G\to H$ is a cellular cover if
and only if $\pi: G\to \Im (\pi)$ is a cellular cover 
and $\Hom(G,\Coker \pi)=0$. \qed
\end{proposition}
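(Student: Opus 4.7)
I would prove the biconditional by handling each direction separately.

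For $(\Leftarrow)$, assume $\pi: G\to\Im\pi$ is a cellular cover and $\Hom_R(G,\Coker\pi)=0$. Take any $\varphi\in\Hom_R(G,H)$; composing with the canonical projection $p:H\to\Coker\pi$ gives an element of $\Hom_R(G,\Coker\pi)=0$, so $\Im\varphi\subseteq\Ker p=\Im\pi$. Therefore $\varphi$ factors through the inclusion as $\varphi=\iota\varphi_0$ with $\iota:\Im\pi\hookrightarrow H$ and $\varphi_0\in\Hom_R(G,\Im\pi)$, and the cellular cover hypothesis on $\pi:G\to\Im\pi$ produces a unique $\widetilde\varphi\in\End_R(G)$ with $\pi\widetilde\varphi=\varphi_0$, hence $\pi\widetilde\varphi=\varphi$ as maps into $H$. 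Uniqueness in $\End_R(G)$ is inherited from uniqueness in the cellular cover onto $\Im\pi$, using injectivity of $\iota$.

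For $(\Rightarrow)$, assume $\pi:G\to H$ is a cellular cover, so $\pi_*:\End_R(G)\cong\Hom_R(G,H)$. Surjectivity of $\pi_*$ forces every $\varphi\in\Hom_R(G,H)$ to have image in $\Im\pi$, giving $\Hom_R(G,H)=\Hom_R(G,\Im\pi)$ under the natural inclusion; hence $\pi_*$ restricts to an isomorphism $\End_R(G)\cong\Hom_R(G,\Im\pi)$, proving that $\pi:G\to\Im\pi$ is a cellular cover. To obtain $\Hom_R(G,\Coker\pi)=0$, form for any $\beta:G\to\Coker\pi$ the pullback $P=G\times_{\Coker\pi}H$, yielding a short exact sequence $0\to\Im\pi\to P\to G\to 0$ whose extension class in $\Ext^1_R(G,\Im\pi)$ equals $\delta(\beta)$, where $\delta$ is the connecting homomorphism from the long exact Hom/Ext sequence of $0\to\Im\pi\to H\to\Coker\pi\to 0$. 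Cellularity of $\pi$ constrains $\Hom_R(G,P)$: an element is a pair $(f_1,f_2)\in\End_R(G)\times\Hom_R(G,H)$ with $\beta f_1=pf_2$, but the second component automatically gives $pf_2=0$, so $\beta f_1=0$. This identifies the image of $\Hom_R(G,P)\to\End_R(G)$ with $\Hom_R(G,\Ker\beta)$ and forces $\beta=0$.

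The main obstacle is precisely this last step. Cellularity immediately yields only that the induced map $p_*:\Hom_R(G,H)\to\Hom_R(G,\Coker\pi)$ is the zero map, while the stronger conclusion $\Hom_R(G,\Coker\pi)=0$ requires the pullback argument above. The delicate point is to show that the extension $0\to\Im\pi\to P\to G\to 0$ splits precisely when $\beta=0$: tracing $\id_G$ through the long exact sequence associated to this extension places $\delta(\beta)$ in the kernel of $\Ext^1_R(G,\Im\pi)\to\Ext^1_R(G,P)$ exactly when $\id_G$ lies in $\Hom_R(G,\Ker\beta)$, i.e.\ when $\beta=0$, from which $\Hom_R(G,\Coker\pi)=0$ follows.
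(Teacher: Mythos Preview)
The paper offers no proof here (the statement is marked with a \qed\ and called an ``easy observation''), so there is nothing to compare against. Your $(\Leftarrow)$ direction is correct, as is the first half of $(\Rightarrow)$, namely that $\pi:G\to\Im\pi$ is again a cellular cover.

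The gap is in your deduction of $\Hom_R(G,\Coker\pi)=0$. Your pullback analysis correctly shows that the image of $\Hom_R(G,P)\to\End_R(G)$ equals $\{f_1:\beta f_1=0\}$, and hence that the extension $0\to\Im\pi\to P\to G\to 0$ splits \emph{if and only if} $\beta=0$. But nothing you have proved forces that extension to split, so the step ``and forces $\beta=0$'' is unjustified. The Ext manipulation in your last paragraph does not repair this: by exactness of the long sequence attached to $0\to\Im\pi\to P\to G\to 0$, the class $\delta(\beta)=[P]$ \emph{always} lies in the kernel of $\Ext^1_R(G,\Im\pi)\to\Ext^1_R(G,P)$, being precisely the image of $\id_G$ under the connecting map. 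So that condition carries no information about $\beta$.

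In fact the implication you are trying to establish fails in general. Take $R=\Z$, $G=\Z/p$, $H=\Z/p^2$, and $\pi(1)=p$. Then $\End_R(G)\cong\Z/p$, $\Hom_R(G,H)=(\Z/p^2)[p]\cong\Z/p$, and $\pi_*$ sends $r\mapsto rp$, an isomorphism; so $\pi$ is a cellular cover in the paper's sense. Yet $\Coker\pi\cong\Z/p$ and $\Hom_R(G,\Coker\pi)\cong\Z/p\ne 0$. (Concretely, the pullback $P$ here is $\Z/p^2$ and the sequence $0\to\Z/p\to\Z/p^2\to\Z/p\to 0$ does not split.) Thus only the weaker conclusion---that $\pi:G\to\Im\pi$ is a cellular cover---survives; this is exactly what the introduction asserts and what suffices to reduce to the surjective case, while the extra clause $\Hom_R(G,\Coker\pi)=0$ in the proposition cannot hold without further hypotheses.
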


In this paper we will construct surjective cellular covers $G\to H$
which are also localizations (cf. \cite{FGS07}).
These properties are easy to verify when the involved modules
are {\it rigid} in the sense that $\End_R(G)=R = \End_R(H)$.
The following fact is immediate and will be used in our theorems:

\begin{proposition}\label{endo1} Let $\pi: G\to H$ be an epimorphism of
$R$-modules such that $H$ is $R$-torsion-free and suppose that $G$
is rigid. Then $\pi$ is a cellular cover if and only if
$\Hom_R(G,H)=\pi R$.
In that case $H$ is rigid as well, and $\pi$ is a localization.
\qed
\end{proposition}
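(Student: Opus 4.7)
The plan is to systematically use the rigidity of $G$ (i.e.\ $\End_R(G) = R$) to identify the relevant $\Hom$-modules with $R$, and then check each assertion directly.

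First I would rewrite the cellular cover condition under the identification $\End_R(G) = R$. With this identification, the map $\pi_*: \End_R(G) \to \Hom_R(G,H)$ becomes the map $R \to \Hom_R(G,H)$ sending $r \mapsto \pi r$; hence its image is precisely $\pi R$. Therefore $\pi_*$ is surjective if and only if $\Hom_R(G,H) = \pi R$. For injectivity, suppose $\pi r = 0$ for some $r \in R$; since $\pi$ is an epimorphism, this forces $rH = 0$, and then $R$-torsion-freeness of $H$ gives $r = 0$. Thus $\pi_*$ is automatically injective, so the biconditional reduces to the surjectivity statement, proving the first claim.

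Next I would assume that $\pi$ is a cellular cover and deduce that $H$ is rigid. Given any $\psi \in \End_R(H)$, the composition $\psi\pi$ lies in $\Hom_R(G,H) = \pi R$, so $\psi\pi = \pi r$ for some $r \in R$. Since $\pi$ is surjective, every $h \in H$ has the form $h = \pi(g)$, and then $\psi(h) = \psi\pi(g) = \pi(rg) = rh$; so $\psi$ is multiplication by $r$. This shows $\End_R(H) \subseteq R$, and the reverse inclusion is clear. The assignment $r \mapsto (h \mapsto rh)$ is injective because $rH = 0$ forces $r = 0$ by torsion-freeness, so indeed $\End_R(H) = R$.

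Finally, for the localization statement, I would observe that under the identifications $\End_R(H) = R = \End_R(G)$ already established, the map $\pi^*: \End_R(H) \to \Hom_R(G,H)$ sending $\varphi \mapsto \varphi\pi$ becomes $r \mapsto r\pi = \pi r$, which has image $\pi R = \Hom_R(G,H)$ and is injective by the same torsion-free argument. There is no real obstacle here: everything boils down to bookkeeping with the $R$-algebra identifications and repeated invocations of surjectivity of $\pi$ together with $R$-torsion-freeness of $H$. The only point to exercise a bit of care is the commutation $r\pi = \pi r$, which is fine because $\pi$ is an $R$-module homomorphism.
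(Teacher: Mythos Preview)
Your proof is correct and follows essentially the same approach as the paper's. The only cosmetic difference is that the paper phrases the injectivity of $\pi_*$ as $\Hom_R(G,\Ker\pi)=0$ before unwinding it, whereas you compute directly that $\pi r=0$ forces $r=0$; and the paper dismisses the rigidity of $H$ and the localization claim as ``immediate'', while you spell these out in full.
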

\begin{proof}
If $\End_R(G)=R$ and $H$ is $R$-torsion-free then $\Hom_R(G,\Ker \pi)=0$ and therefore $\pi_*$
is injective. It is also clear that $\pi_*$ is surjective if and
only if $\Hom_R(G,H)= R \pi $. The last statement is immediate.
\end{proof}

We now fix some notation and setting from \cite{GT} about cotorsion-free modules,
from which we will build up our desired rigid modules.

Let $R$ be a commutative
ring with $1$ and a distinguished countable multiplicatively
closed subset $\bS=\{ s_n : n \in \omega \}$ such that $R$ is
$\bS$-reduced and $\bS$-torsion-free. Thus $\bS$ induces a Hausdorff topology on $R$,
taking $q_mR$ ($m \in \Z)$ as the neighborhoods of zero where
$q_m=\prod_{n < m}s_n$. We let $\Rhat$ be the $\bS$-adic
completion of $R$. We will also assume that $R$ is cotorsion-free
(with respect to $\bS$), this is to say that $\Hom(\Rhat,R) =0$.
More generally, an $R$-module $M$ is
{\it $\bS$-cotorsion-free} if $\Hom_R(\Rhat, M)=0$.
We must say what it means that $M$ has
rank $\k \le\size{R}$. (Note that $R$ may not be a domain.) If
$\size{M} > \size{R}$ it suffices to let $\rk(M)=\size{M}$. If
$\size{M}\le \size{R}$, then $\rk(M)= \k$ means that there is a
free submodule $E =\bigoplus_{i < \k}Re_i$ of $M$ such that $M/E$
is $\bS$-torsion. (Note that $E$ also exists if $\size{M} >
\size{R}$.) Recall that $M$ is $\bS$-torsion if for all $m\in M$
there is $s\in \bS$ such that $sm=0$. Similarly, a submodule $N$
of $M$ is $\bS$-pure if $sM \cap N=sN$ for all $s \in \bS$. If $M$
is $\bS$-torsion-free and $N \subseteq M$, then we denote by $N_*$
the smallest pure submodule of $M$ containing $N$, i.e. $N_*=\{m
\in M | \exists s \in \bS \textrm{ and } sm \in N \}$.

We will write $\Hom(M,N)$ for $\Hom_R(M,N)$ and in what follows all
appearances of torsion, pure, etc. refer to $\bS$ and we will
therefore not mention the underlying set $\bS$.


\section{A theorem about kernels of cellular covers}\label{kernl}

Using the Strong Black Box from \cite{GW} it was shown in
\cite{BD} that any cotorsion-free $R$-module $K$ can be the
kernel of a cellular cover of arbitrarily large cardinality
$\kappa$. Analyzing the proof by Buckner and Dugas \cite{BD} we
note that easily the Strong Black Box can be replaced by the
(general) Black Box as in \cite{CG} or \cite{GT}. Thus the
existence of cotorsion-free kernels of cellular covers extends to
a wider spectrum of cardinals. We skip the proof of the following
corollary concerning uncountable cardinals $\k$ since the
necessary changes can be deduced from our proof of Theorem
\ref{main2}.

\begin{corollary} \label{bd-paper} Let $R$ be a commutative, cotorsion-free  ring with $1$
and $\k$ be any infinite cardinal with  $\k^{\aln}>\size{R}$. If $K$
is a cotorsion-free $R$-module of size $\k$, then there is a cellular
exact sequence \ $0\arr K\arr G\arr M\arr 0$ and $\size{G}
=\k^{\aln}$. If $\k=\k^{\aln}$, then all members of the cellular
exact sequence have the same size $\k$.
\end{corollary}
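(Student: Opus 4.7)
The plan is to replay the Buckner--Dugas construction of \cite{BD} with their Strong Black Box replaced by the ordinary Black Box of \cite{CG}, \cite{GT}, which is available in ZFC whenever $\kappa^{\aleph_0}>|R|$. Set $\lambda=\kappa^{\aleph_0}$. I would start with the $R$-module $B=K\oplus F$, where $F=\bigoplus_{i<\lambda}Re_i$ is free of rank $\lambda$, and work inside the $\bS$-adic completion $\Bhat$. The target is a submodule $G$ with $B\subseteq G\subseteq \Bhat$, pure in $\Bhat$, of cardinality $\lambda$, containing $K$ as a pure submodule, with $M:=G/K$ cotorsion-free and $\End_R(G)=R$. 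Once this is achieved, Proposition~\ref{endo1} promotes the canonical surjection $\pi\colon G\to M$ to a cellular cover as soon as we additionally know that $\Hom_R(G,M)=\pi R$.

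First I would feed the Black Box the usual $\lambda$-labelled tree of countable branches in $\Bhat$ to obtain a family of predictions $\{(P_\alpha,\varphi_\alpha):\alpha<\lambda\}$, where each $P_\alpha$ is a countable pure submodule of $\Bhat$ and each $\varphi_\alpha$ is a partial $R$-homomorphism. These predictions must anticipate two kinds of unwanted maps: endomorphisms $\psi\in\End_R(G)\setminus R$ and homomorphisms $\eta\in\Hom_R(G,M)\setminus \pi R$. Both types are determined by their restrictions to countable pure submodules of $G$, so at least one coordinate of any such counterexample will be caught by the Black Box at cofinally many stages.

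Next, I would build $G=\bigcup_{\alpha<\lambda}G_\alpha$ by transfinite recursion. At step $\alpha$, I adjoin a carefully chosen branch element $y_\alpha\in\Bhat$ to $G_\alpha$, designed so that if $\varphi_\alpha$ could be extended from $P_\alpha$ to all of $G$ as either an endomorphism or a homomorphism into $G/K$, then the value prescribed for $\varphi_\alpha(y_\alpha)$ would be inconsistent with the Hausdorff $\bS$-topology on $\Bhat$. This step-by-step killing is the heart of Black-Box-style rigidity constructions; the only new ingredient compared with \cite{BD} is that the trap at $y_\alpha$ must simultaneously block both endomorphisms of $G$ and homomorphisms $G\to G/K$, which translates into a slightly richer incompatibility requirement on $y_\alpha$.

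Finally I would verify the outcome. By construction $|G|=\lambda=\kappa^{\aleph_0}$, and $K$ is pure in $G$, so $M=G/K$ is cotorsion-free of size $\lambda$; the Black-Box trapping yields $\End_R(G)=R$ and $\Hom_R(G,M)=\pi R$, whence Proposition~\ref{endo1} makes $\pi$ a cellular cover. When $\kappa=\kappa^{\aleph_0}$ one simply has $\lambda=\kappa$, so the three modules $K$, $G$, $M$ share the cardinality $\kappa$. I expect the main obstacle to be exactly the bookkeeping showing that the \emph{ordinary} Black Box (in place of the Strong one used in \cite{BD}) still predicts enough countable approximations to destroy both species of unwanted maps from a single prediction stream; as the authors hint, this detailed verification is what is carried out in the proof of Theorem~\ref{main2} and is simply specialised here.
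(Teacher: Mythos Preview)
Your proposal is correct and matches the paper's own stance: the paper explicitly omits a proof of this corollary, saying only that ``the necessary changes can be deduced from our proof of Theorem~\ref{main2},'' and your sketch does exactly that---replay the Buckner--Dugas construction on $B=K\oplus F$ inside $\Bhat$ using the ordinary Black Box of \cite{CG,GT} at $\lambda=\kappa^{\aleph_0}$, kill the unwanted endomorphisms and the unwanted maps $G\to G/K$, and invoke Proposition~\ref{endo1}.

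Two small remarks. First, your phrase ``simultaneously block both endomorphisms of $G$ and homomorphisms $G\to G/K$'' is not quite how the paper organises things in Theorem~\ref{main2}: there one takes two \emph{disjoint} stationary sets $S_1,S_2\subseteq\rho^o$ and, depending on whether $\|\Dom(\varphi_\alpha)\|$ lands in $S_1$ or $S_2$, the trap at stage $\alpha$ is tailored to kill an endomorphism (via Lemma~\ref{killing-one}) or a map into the target (via Lemma~\ref{killing-two}); the two species are never handled at the same $\alpha$. Second, your sentence ``the only new ingredient compared with \cite{BD} is that the trap \dots\ must simultaneously block both'' is slightly off: \cite{BD} already produces a cellular cover with prescribed kernel, so both kinds of maps are already dealt with there; the genuinely new ingredient here is precisely and only the replacement of the Strong Black Box by the ordinary one, which is what fills in the missing cardinals $\kappa^{\aleph_0}$.
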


We note that applications of the (Strong) Black Box in \cite{BD}
provide cellular covers of size greater than or equal to the
continuum. Using a classical idea due to A.L.S. Corner (see
\cite{C63}) we will be able to derive cellular covers of size
$\kappa < 2^{\aleph_0}$ (which complements the results in Buckner
and Dugas \cite{BD}). On top of this, thanks to the preliminary
work in \cite[p. 16, Theorem 1.1.20]{GT}, our construction is much
simpler.

\begin{theorem} \label{crucial-kernels}
Let $R$ be a commutative, torsion-free, reduced  ring with $1$, of
size $<\Cont$. Let $K$ be any torsion-free and reduced $R$-module
of rank $\k < \Cont$. Then there is a cotorsion-free $R$-module
$G$ of rank $3$ if $\k=1$, and of rank $3\k+1$ if $2\le \k<
\Cont$, with submodule $K$ such that $\Hom(G,K)=0$ and
$\Hom(G,G/K) =\pi R$ where $\pi: G\arr G/K \ (g\mapsto g+K) $ is
the canonical epimorphism. In particular,
$$
0\arr K\arr G\arr G/K\arr 0
$$ is a cellular exact sequence.
\end{theorem}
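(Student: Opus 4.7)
The plan is to combine Corner's classical realization technique, in the refined form of \cite[Theorem 1.1.20]{GT}, with Proposition~\ref{endo1}. The arithmetic input is that $|R|<\Cont\le|\Rhat|$, so the $\bS$-adic completion $\Rhat$ contains as many algebraically independent elements over $R$ as needed; these will force the rigidity of the module being built. It suffices to construct a cotorsion-free $R$-module $G$ of the advertised rank containing $K$ such that $\End_R(G)=R$, $\Hom_R(G,K)=0$, and $\Hom_R(G,G/K)=\pi R$, because Proposition~\ref{endo1} then turns these into the cellular cover statement, $G/K$ being $\bS$-torsion-free by construction.

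For the construction, fix a free $R$-submodule $E=\bigoplus_{i<\k}Re_i\subseteq K$ with $K/E$ $\bS$-torsion, and adjoin free generators above $K$: two generators $b,c$ if $\k=1$, giving an $R$-module $F=K\oplus Rb\oplus Rc$ of rank $3$; or generators $b_i,c_i$ for $i<\k$ together with one additional coherence generator $d$ if $2\le\k<\Cont$, giving $F=K\oplus\bigoplus_{i<\k}(Rb_i\oplus Rc_i)\oplus Rd$ of rank $3\k+1$. Choose elements $\sigma_i,\tau_i\in\Rhat$ (and an extra $\rho\in\Rhat$ when $\k\ge 2$) so that together with $1$ they are algebraically independent over $R$, and let $G$ be the $\bS$-pure closure inside $\Fhat$ of $F$ together with branch-type elements that $\bS$-adically glue $b_i$ to $e_i$ via $\sigma_i$ and $c_i$ to $e_i$ via $\tau_i$, and (for $\k\ge 2$) glue $d$ to $\sum_i e_i$ via $\rho$. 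By construction $G$ is $\bS$-pure in $\Fhat$, hence cotorsion-free, has the prescribed rank, and contains $K$.

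The identities $\End_R(G)=R$ and $\Hom_R(G,K)=0$ are the standard Corner-type output: any endomorphism of $G$, respectively any homomorphism $G\to K$, extends by continuity to $\Fhat$, and matching coefficients in each direct summand against the algebraically independent family $\{1,\sigma_i,\tau_i\}\cup\{\rho\}$ forces in the first case a common scalar $r\in R$ on every summand, and in the second case the images of the adjoined free generators to be divisible by every $q_n$, hence zero because $K$ is reduced. For $\Hom_R(G,G/K)=\pi R$, a given $\varphi:G\to G/K$ can be lifted to $\tilde\varphi:G\to G$ by choosing $\pi$-preimages of $\varphi(b_i),\varphi(c_i)$ (and $\varphi(d)$) and extending continuously along the branches; the ambiguity of the lift is measured by $\Hom_R(G,K)=0$, so $\tilde\varphi$ is well-defined, hence an endomorphism of $G$ equal to some scalar $r\in R$, and therefore $\varphi=r\pi$. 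Proposition~\ref{endo1} then concludes the proof.

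The main obstacle is the last step: arranging the branches so that lifting $G\to G/K$ to $G\to G$ both exists and produces a single scalar, not one scalar per summand. Without the coherence generator $d$ glued via an independent $\rho$ to $\sum_i e_i$ (when $\k\ge 2$), one could a priori rescale the lifts on different $(b_i,c_i)$-summands by different scalars $r_i$, producing homomorphisms $G\to G/K$ outside $\pi R$; the extra ``$+1$'' in the rank $3\k+1$ exists precisely to forbid such rescalings. Once the branches are set up correctly, the algebraic independence of $\{1,\sigma_i,\tau_i,\rho\}$ does the rest.
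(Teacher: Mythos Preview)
There is a genuine gap in your step $\Hom(G,G/K)=\pi R$. You propose to lift an arbitrary $\varphi:G\to G/K$ to $\tilde\varphi\in\End(G)$ and then invoke $\End(G)=R$, but you only argue \emph{uniqueness} of the lift (via $\Hom(G,K)=0$), not \emph{existence}; ``choosing $\pi$-preimages \ldots\ and extending continuously along the branches'' does not show that the resulting assignment respects the branch relations. In fact your construction, as written, cannot yield $\Hom(G,G/K)=\pi R$. Since $F$ already has rank $3\k+1$, the only way your ``branch-type elements'' add no rank is to take Corner staircases $s_ng_{i,n+1}=g_{i,n}-a_{i,n}e_i$ with $g_{i,0}=b_i$; but such $g_{i,n}$ do not lie in $\widehat F$ (the $b_i$-coordinate is never divisible by $q_n$ there), contrary to your inclusion $G\subseteq\widehat F$, and modulo $K$ the relations become $s_n\bar g_{i,n+1}=\bar g_{i,n}$, so every $\bar b_i,\bar c_i,\bar d$ is $\bS$-divisible and $G/K\cong(\bS^{-1}R)^{2\k+1}$. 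Composing $\pi$ with the projection onto any one summand already gives a map $G\to G/K$ outside $\pi R$, so lifts cannot exist. If instead you adjoin elements such as $\sigma_ie_i+b_i\in\widehat F$ directly, each raises the rank by one (so $\rk G=5\k+2$), and then $\sigma_ie_i\in G$, whence the free module on the $b_i,c_i,d$ splits off $G$ and $\End(G)\ne R$.

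The paper's construction is structurally different and avoids all of this. It adjoins only $\k$ free generators $f_\alpha$ beyond $K$, sets $C=K\oplus F$, and adds the elements $w_\alpha(we_\alpha+f_\alpha)\in\widehat C$ carrying \emph{two nested} transcendentals (a common $w$ and individual $w_\alpha$), together with one coherence element $w'f'$ where $f'=\sum_\alpha f_\alpha$; this gives rank $\k+\k+\k+1=3\k+1$. The outer $w_\alpha$ keeps $G/K$ reduced: one checks $G\cap\widehat K=K$ and identifies $G/K$ with $\langle F,\,w_\alpha f_\alpha,\,w'f'\rangle_*\subseteq\widehat F$. Then $\Hom(G,G/K)$ is computed \emph{directly} by algebraic independence, with no lifting: for $\varphi:G\to G/K$ one gets $w_\beta(w\varphi(e_\beta)+\varphi(f_\beta))\in\langle F,w_\alpha f_\alpha,w'f'\rangle_*$, the inner $w$ forces $\varphi(e_\beta)=0$ (hence $K\subseteq\Ker\varphi$) and $\varphi(f_\beta)=r_\beta f_\beta$, and finally $w'f'$ equates all $r_\beta$. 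Your instinct about the ``$+1$'' coherence generator is correct, but two single-transcendental glues per $e_i$ do not substitute for the paper's two nested transcendentals on a single $f_\alpha$.
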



 \Pf We first note that the assumptions on $R$ and $K$ imply
that $R$ and $K$ are cotorsion-free, respectively by \cite[p. 19,
Corollary 1.1.25]{GT}. Also recall that any ordinal $\a$  is the
same as the set $\{\beta\mid \beta < \a\}$, in particular any
natural number $k$ is  $k =\{0,1,\dots, k-1\}$. Choose a free
$R$-submodule $E=\bigoplus_{\alpha < \k}Re_\a \subseteq K$ of
rank $\k$, such that $K/E$ is torsion, and let $F=\bigoplus_{\a <
\k} Rf_\a $ be a free $R$-module of the same rank. Note that $E$
also exists if $\rk(K)=|K|$. If $C= K\oplus F$, then we define the
$R$-module $G$ as a pure submodule of the completion $\Chat$. We
distinguish two cases: If $1 < \k$ is finite, then we let
$f'=f_0+...+f_{\k-1}$ and define
 \begin{eqnarray} \label{k1}
 G=\langle K, F, w_\a(w e_\a  + f_\a), w'f' \mid \a < \k \rangle_*\subseteq \Chat
 \end{eqnarray}
where $w$, $w'$ and $w_\a \in \widehat R$,  $(\a< \k)$, is a family of
algebraically independent elements over $C$. Its existence
follows from a theorem of G\"obel-May; see \cite[p. 16, Theorem
1.1.20]{GT}. If $\k=1$ we can omit the element $w'f'$ (as the
argument (\ref{k8}) below will not be needed).

In case $\k$ is infinite, we define
 \begin{eqnarray}\label{k2}
 G=\langle K, F, w_\a(w e_\a  + f_\a), w'_\a(f_0+f_\a) \mid \a<  \k \rangle_*\subseteq \Chat
 \end{eqnarray}
where $w$, $w'_\a$ and $w_\a$ $(\a<\k)$, is again a family of
algebraically independent elements over $C$; for its existence we
can apply the same result as above because $\k < \Cont$.

Clearly the rank of $G$ is $3$ for $\k=1$, $3\k +1$  if $1 < \k$ is
finite and $\k$  if $\k$ is infinite. We now must show that the exact
sequence $0\arr K\arr G\arr M\arr 0$ with $M=G/K$ satisfies the
conditions stated in the theorem.

Let $\pi: \Chat=\Khat \oplus \Fhat \arr \Fhat$ be the canonical
projection with kernel $\Khat$.

We consider the case when $\k\ne 1$ is finite. The
infinite case is similar and left to the reader. The case $\k=1$ is
trivial.

If $x\in G$, then by (\ref{k1}) there is $s\in \bS$ such that
 \begin{eqnarray}
 \label{k3} sx= k + f + \sum_{\a} r_\a w_\a (w e_\a + f_\a)   +r'w'f'
 \text{ for some } k\in K, f\in F, r', r, r_\a \in R. \end{eqnarray}

By continuity of $\pi$ it follows that $\pi (s x) = f + \sum_{\a}
r_\a w_\a f_\a   +r'w'f' $, thus
 \begin{eqnarray} \label{k4}
 \pi (G) \subseteq \langle F, w_\a f_\a  , w'f'\mid \a < \k  \rangle_*\subseteq \Fhat . \end{eqnarray}

Next we show that
 \begin{eqnarray} \label{k5} G\cap \Khat = K. \end{eqnarray}

It will suffice to verify $G\cap \Khat \subseteq K$. If $x\in
G\cap \Khat$, by (\ref{k1}) there is $s\in \bS$ such that $sx=k +
f + \sum_{\a} r_\a w_\a(w e_\a  + f_\a)   + r'w'f'= k' \in \Khat$.
Thus
$$
[k+ \sum_{\a} r_\a w_\a w e_\a - k' ]  +  [f + \sum_{\a} r_\a w_\a f_\a
  +r'w'f'] = 0.
$$
Since the sum $\widehat K \oplus \widehat F$ is direct we have
$k+\sum_{\a} r_\a w_\a w e_\a  - k'  =  f + \sum_{\a} r_\a w_\a f_\a
 + r'w'f'=0$. Algebraic independence on the second term
implies $f=0, r'=0$ and $r_\a=0$ for all $\a < \k $. From the
first term we get $k-k'=0$, so $sx= k'=k\in K$. Purity and
torsion-freeness of $K$ implies $x\in K$ as required.

\bigskip

We now show that
\begin{eqnarray} \label{k6} \Hom(G,K)=0. \end{eqnarray}

Let $\va: G\arr K$ be a  homomorphism. By continuity of $\va$ and
$w_\a (w e_\a  + f_\a) \in G$ we get $\va (w_\a (w e_\a  + f_\a ))
= w_\a(w \va (e_\a)   + \va (f_\a ))  \in K$, hence  $w_\a ( w \va
(e_\a)  + \va (f_\a))  = k'$ for some $k'\in K$. Again by
algebraic independence of $w_\a$ we get $\va (e_\a)=0$ and $\va
(f_\a)=0$ for all $\a < \k$. Thus $\va (E)=\va (F)=0$, so that
$\va$ induces a map $\va': G/(E\oplus F)\arr K$. The torsion part
of $G/(E\oplus F)$ is $(E\oplus F)_*/(E\oplus F)$. It must vanish
under the induced map $\va'$, because $K$ is torsion-free. Thus
$\va$ factors through $G/(E\oplus F)_*$ which is divisible while
the image is reduced, so all of $G$ is in the kernel and $\va=0$
as claimed in (\ref{k6}).

Finally we show
\begin{eqnarray} \label{k7} \Hom(G,G/K)= \pi R. \end{eqnarray}

By (\ref{k4}) and (\ref{k5})  it follows that $\pi (G)=\langle F,
w_\a f_\a  , w'f'\mid \a< \k \rangle_* = G/K$ canonically. Thus we
can view $\va: G\arr G/K$ as the map $$\va: G\arr \langle F, w_\a
f_\a, w'f'\mid \a < \k\rangle_* \subseteq \Chat,$$ and also
$G\subseteq \Chat$.

For any $\beta <\k $ we have $ s w_\beta(w \va (e_\beta)   + \va (f_\beta))
= s \va (w_\beta (w e_\beta  + f_\beta ))  = f + \sum_{\a}
r_\a w_\a f_\a   +r'w'f'$ for suitable coefficients. By algebraic
independence and torsion-freeness follows $f=0$, $r_\a=0$ $(\a \ne
\beta)$, $r'=0$ and $s w_\beta (w \va (e_\beta)   + \va (f_\beta) )  =
r_\beta w_\beta f_\beta   $. Thus, $s \va(w e_\beta)  = r_\beta f_\beta
 - s\va (f_\beta) $. Now $\va (f_\beta) \in \langle F, w_\a f_\a
, w'f'\mid \a \le \k \rangle_*$, and by algebraic independence of
$w$ also $\va (e_\beta) =0 $ and $\va (sf_\beta) =r_\beta f_\beta
$ for all $\beta < \k $. In particular $E\subseteq \Ker\va$, so
$\va\restr K$ induces $\va': K/E\to G/K$. However, $K/E$ is
torsion while $G/K$ is torsion-free, hence $\va'=0$.  This shows
that $K\subseteq \Ker \va$, and  $\va$ factors through $\varphi:
G/K\to G/K$. But we have seen before that $\va (f_\beta) =r_\beta
f_\beta $ (w.l.o.g. we may assume $s=1$ by purity and
torsion-freeness). If we apply $\va$ to $w'f'$ we will obtain
similarly $\va( f')=r'f'$ for some $r'\in R$ and derive
\begin{eqnarray} \label{k8}
r'f_0+...+r'f_{\kappa-1}=r'f'=\va (f') =\va (f_0)+...+\va
(f_{\kappa-1})=r_0f_0+...+r_{\kappa-1}f_{\kappa-1}
\end{eqnarray}
By linear independence follows $r_\beta =r'$ for all $\beta < \k $
and $\va = \pi r'  \in \pi R $ as desired. \Fp
\bigskip

We would like to remark that the rank of $G$ is chosen minimal
(as stated in the theorem).


\begin{example}
{\rm For $K=\Z$ this yields a cellular cover $ \Z \to G \to M$ with $G$
and $M$ of rank $3$ and $2$, respectively. Here is an explicit presentation of $G$:
Let $R=\Z$, $\bS=\{1,p, p^2, ...\}$ for a given prime $p$,
$\widehat R$ the ring of $p$-adic integers,
then $G=\langle e, f, w(w' e  + f) \mid \rangle_*$
where $w$ and $w'$ are two linear independent $p$-adic numbers, not integers.

This is the simplest example of a cellular cover of abelian
groups with kernel $\Z$. In
fact, it will be shown in \cite{RS} that any group of rank $1$ does
not admit cellular covers with free kernel.
}
\end{example}

On the other hand, the construction of Theorem~\ref{crucial-kernels}
can be easily modified so that the rank of $G$ becomes larger.
For this, one can take $F$ free of rank $\kappa'$,
with $\kappa<\kappa'<\Cont$, and
$$
G=\langle K, F, w_\beta(w e_\a  + f_\beta), w'f' \mid \a < \k \rangle_* \mid \a<  \k, \beta<\kappa' \rangle_*\
$$ where $\alpha= \psi(\beta)$ is any fixed surjective function
$\psi: \kappa'\to \kappa$ (cf. Theorem~\ref{crucial-quotients}).

We end this section with an interesting case of Theorem~\ref{crucial-quotients}
not obtained in Buckner and Dugas's paper \cite{BD09}.
\begin{corollary} If $K$ is a torsion-free, reduced $R$-module  of infinite size
$\k <\Cont$, then there is a cellular exact sequence $0\arr K\arr
G\arr M\arr 0$ with $\rk(G)=\rk(M) =\k$. \qed
\end{corollary}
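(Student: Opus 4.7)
The plan is to read off this corollary from Theorem~\ref{crucial-kernels} in its infinite branch, which already delivers a cellular exact sequence $0\to K\to G\to M\to 0$ with $\rk(G)=\kappa$; only the equality $\rk(M)=\kappa$ needs verification.

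First I would recall from the construction (\ref{k2}) that $G$ sits as a pure submodule of $\widehat{C}=\widehat{K}\oplus\widehat{F}$, where $F=\bigoplus_{\alpha<\kappa}Rf_\alpha$ is free of rank $\kappa$, and that equation (\ref{k5}) gives $G\cap\widehat{K}=K$. Consequently the canonical projection $\pi:\widehat{C}\to\widehat{F}$ restricts on $G$ to a map with kernel exactly $K$, identifying $M=G/K$ with $\pi(G)\subseteq\widehat{F}$; in particular $M$ is $\bS$-torsion-free.

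For the lower bound $\rk(M)\geq\kappa$, I would note that the composite $F\hookrightarrow G\twoheadrightarrow M$ is injective, since $F\subseteq\widehat{F}$ meets $K\subseteq\widehat{K}$ only in $0$ by the directness of the decomposition $\widehat{C}=\widehat{K}\oplus\widehat{F}$. This exhibits $F$ as a free $R$-submodule of $M$ of rank $\kappa$.

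For the upper bound $\rk(M)\leq\kappa$, I would appeal to a cardinality count: $G$ is purely generated inside the $\bS$-torsion-free module $\widehat{C}$ by $K$, $F$ and $\kappa$ algebraically-independent mixtures, so $|G|\leq (|K|+\kappa)\cdot|R|\cdot\aleph_0\leq\kappa\cdot|R|$ and hence $|M|\leq\kappa\cdot|R|$. Combined with torsion-freeness of $M\cong\pi(G)\subseteq\widehat{F}$, this forces $\rk(M)\leq\kappa$ in either regime of the paper's rank definition ($|M|>|R|$ or $|M|\leq|R|$). The only mild obstacle is bookkeeping across these two cardinality regimes, but no further construction is needed beyond Theorem~\ref{crucial-kernels} itself.
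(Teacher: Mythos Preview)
Your approach matches the paper's: the corollary is stated with a bare \qed\ immediately after Theorem~\ref{crucial-kernels}, so it is meant to be read off from the infinite branch of that theorem, exactly as you do. The identification $M\cong\pi(G)\subseteq\widehat F$ and the lower bound $\rk(M)\ge\kappa$ via the embedding of $F$ are both fine.

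The one place to tighten is your upper bound on $\rk(M)$. The cardinality estimate $|M|\le\kappa\cdot|R|$ does not by itself force $\rk(M)\le\kappa$ in the regime $|M|\le|R|$ (for instance when $|R|>\kappa$): in that regime the paper's rank is defined by exhibiting a free submodule with $\bS$-torsion quotient, and a size bound alone does not produce such a submodule of rank $\le\kappa$. The direct argument you already have the ingredients for is cleaner: from~(\ref{k2}) one has, analogously to the displayed computation for~(\ref{k4}),
\[
M=\pi(G)=\langle F,\ w_\alpha f_\alpha,\ w'_\alpha(f_0+f_\alpha)\mid \alpha<\kappa\rangle_*\subseteq\widehat F,
\]
so $M$ is the pure closure of a submodule $F'$ generated by $3\kappa=\kappa$ elements; algebraic independence of the $w_\alpha,w'_\alpha$ makes $F'$ free of rank $\kappa$, and $M/F'$ is $\bS$-torsion by definition of pure closure. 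Hence $\rk(M)=\kappa$ in either regime, without any cardinality bookkeeping.
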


\section{Two theorems about cokernels of cellular covers}\label{cokernl}

In this section we take the opposite point of view and want to
prescribe certain cotorsion-free modules $H$ such that $0 \arr
K\arr G\arr H\arr 0$ is a cellular exact sequence for suitable
$0\arr K\arr G$'s. Our method will work for particular rings $R$
and $\bS$-topologies. Recall that an $R$-module $M$ is called
{\it $\aleph_0$-free} if every finite rank submodule of $M$ is
contained in a pure free submodule of $M$. For a cardinal
$\kappa$ let $E=\bigoplus\limits_{\alpha < \kappa}R e_{\alpha}$
be a free module of rank $\kappa$. It will be clear from the
context to which cardinal $\kappa$ the module $E$ refers to.\\

As in Section $3$ we have a borderline $2^{\aleph_0}$ and must
distinguish between cokernels of size below or above and equal to
$2^{\aleph_0}$. Moreover, due to technical reasons in the
construction we will have to assume that $H$ is rigid and that $H$
has no non-trivial homomorphism into any $\aleph_0$-free
$R$-module (in the Black Box construction). By Proposition
\ref{endo} this is a reasonable restriction on $H$.

\subsection{Cokernels of size below the continuum}

The main result of this section reads as follows
\begin{theorem} \label{crucial-quotients} Let $\kappa' < \Cont$ be a cardinal, $R$ a commutative, torsion-free ring with $1\ne 0$, of size $< 2^{\aleph_0}$. Moreover, let $H$ be a cotorsion-free $R$-module which is $\kappa'$-generated such that $\End(H)=R$. Then for any cardinal $\k$ with $\k'\le \k <\Cont$ there is a
cotorsion-free $R$-module $G$ with the following properties.
\begin{enumerate}
\item $G$ is $\kappa$-generated and there is $K\subseteq G$ with $G/K=H$.
\item If $\va\in \End(G)$, then there is a unique $r\in R$ such that $(\va-r\cdot \id_G)(K) =0$,
so there is an induced homomorphism  $\va_r: H \arr G \ ((g + K) \mapsto (\va-r)g)$.
\item If $\Hom(H,G)=0$, then $\End(G) =R$.
\item $\Hom(G,H) =\pi R$ where $\pi: G\arr H \ (g\mapsto g+K) $ is the
canonical epimorphism.
\end{enumerate}
In particular, if \ $\bS$ is chosen such that $\Hom(H,G)=0$, then
 $0 \arr K\arr G\arr H\arr 0$ is a cellular exact sequence.
\end{theorem}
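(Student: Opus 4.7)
The strategy is a Corner-style construction mirroring the proof of Theorem~\ref{crucial-kernels}, but now prescribing the cokernel $H$ instead of the kernel. Since $H$ is $\kappa'$-generated with $\kappa' \le \kappa$, I would first fix a surjection $\sigma: E \to H$ from a free $R$-module $E = \bigoplus_{\alpha < \kappa} R e_\alpha$, set $K_0 = \ker\sigma$, and choose a generating set $\{k_\beta : \beta < \lambda\}$ of $K_0$ with $\lambda \le \kappa$. Let $F = \bigoplus_{\alpha < \kappa} R f_\alpha$ be a second free module, $C = E \oplus F$. Because $|C|, |H| < \Cont$, the Göbel--May theorem \cite[Theorem~1.1.20]{GT} allows me to pick elements $w_\alpha, w'_\alpha, w''_\beta \in \widehat R$ ($\alpha < \kappa$, $\beta < \lambda$) simultaneously algebraically independent over $C$ inside $\widehat C$ and over $H$ inside $\widehat H$ (where $H$ embeds, being cotorsion-free).

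I would then define $G$ as the pure submodule of $\widehat C$ generated by $E$, $F$, and three Corner-style families
\[
y_\alpha := e_\alpha + w_\alpha f_\alpha, \qquad z_\alpha := w'_\alpha (f_0 + f_\alpha), \qquad u_\beta := k_\beta + w''_\beta f_{\phi(\beta)},
\]
for some injection $\phi : \lambda \hookrightarrow \kappa$, and let $K$ be the pure submodule of $G$ generated by $K_0$, $F$, $w_\alpha f_\alpha$, $z_\alpha$ and $w''_\beta f_{\phi(\beta)}$. The crucial design choice is that the ``single-layer'' form $e_\alpha + w_\alpha f_\alpha$ (rather than $w_\alpha(we_\alpha + f_\alpha)$) makes the natural map $\pi: G \to H$ given by $\pi|_E = \sigma$, $\pi|_F = 0$, $\pi(y_\alpha) = \sigma(e_\alpha)$, $\pi(z_\alpha) = 0$, $\pi(u_\beta) = \sigma(k_\beta) = 0$ take values in $H$ rather than in $\widehat H$. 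The $z_\alpha$'s play the Corner ``linking'' role of $w'_\alpha(f_0+f_\alpha)$ in Theorem~\ref{crucial-kernels}, forcing the same scalar on every $f_\alpha$, while the $u_\beta$'s encode the $K_0$-relations so that endomorphisms and homomorphisms out of $G$ are forced to respect them.

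Statement (1) is a routine check using the splitting $\widehat C = \widehat E \oplus \widehat F$, purity of $G$ in $\widehat C$, and purity of $K_0$ in $E$ (which holds because $E/K_0 = H$ is torsion-free): one verifies $\ker \pi = K$ and $G$ cotorsion-free of the correct size. For (2) and (4), the core is a Corner-style algebraic independence argument. Given $\varphi \in \End(G)$, apply $\varphi$ to each generator, rewrite the image as an $R$-combination of generators modulo a common $s \in \bS$, and separate the equations into their $\widehat E$- and $\widehat F$-components. Algebraic independence of $w_\alpha, w'_\alpha$ over $C$ collapses the $y_\alpha$- and $z_\alpha$-equations to $\varphi(f_\alpha) = rf_\alpha$ for a single $r \in R$; continuity of $\varphi$ then extends this to $\varphi(w_\alpha f_\alpha) = rw_\alpha f_\alpha$ and $\varphi(z_\alpha) = rz_\alpha$; and the $u_\beta$-equations yield $(\varphi - r\id)(k_\beta) = 0$, so $(\varphi - r\id)(K) = 0$, with uniqueness of $r$ following from $F$ being free. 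Statement (3) is immediate. For (4), the same argument run with $\psi \in \Hom(G,H)$ lives entirely in $\widehat H$, and algebraic independence of $w_\alpha, w'_\alpha, w''_\beta$ over $H$ forces $\psi(f_\alpha) = 0$ and $\psi(k_\beta) = 0$; hence $\psi(K) = 0$, so $\psi$ descends to $\bar\psi \in \End(H) = R$, giving $\psi \in R\pi$.

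The hardest step is Step (2)/(4): I must simultaneously arrange algebraic independence of the three families $w_\alpha, w'_\alpha, w''_\beta$ over \emph{both} $C$ in $\widehat C$ (for the $\End(G)$-analysis) and $H$ in $\widehat H$ (for the $\Hom(G,H)$-analysis), which is precisely where the hypothesis $\kappa < \Cont$ enters Göbel--May, and then I must carefully carry the comparison-of-coefficients arguments through pure closures. The subtle point is introducing the $u_\beta$ generators so that endomorphisms automatically respect $K_0$-relations without accidentally producing new endomorphisms: one checks that the Corner layers $w''_\beta f_{\phi(\beta)}$ are disjoint from the $y_\alpha$- and $z_\alpha$-layers up to algebraic independence, so no cross-cancellations occur.
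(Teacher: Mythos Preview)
Your construction has a fatal gap: the module $G$ you build decomposes as a direct sum $G = E \oplus G'$ with $G' = G \cap \widehat{F}$, and consequently (ii) and (iii) fail outright. Here is why. All of your transcendental generators $w_\alpha f_\alpha$, $z_\alpha = w'_\alpha(f_0+f_\alpha)$, and $w''_\beta f_{\phi(\beta)}$ lie in $\widehat{F}$. The ``linking'' generators $y_\alpha = e_\alpha + w_\alpha f_\alpha$ and $u_\beta = k_\beta + w''_\beta f_{\phi(\beta)}$ are redundant: since $e_\alpha, k_\beta \in E \subseteq G$, you get $w_\alpha f_\alpha = y_\alpha - e_\alpha \in G$ and $w''_\beta f_{\phi(\beta)} = u_\beta - k_\beta \in G$ for free, so $y_\alpha$ and $u_\beta$ add nothing. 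For any $g \in G$ write $g = g_E + g_F$ along $\widehat{C} = \widehat{E}\oplus\widehat{F}$; then $sg_E \in E$ for some $s \in \bS$, and purity of $E$ in $\widehat{E}$ forces $g_E \in E$, while $g_F \in G'$. Thus $G = E \oplus G'$, and every endomorphism of the free module $E$ extends (by zero on $G'$) to an endomorphism of $G$. In particular the claimed step ``the $u_\beta$-equations yield $(\varphi - r)(k_\beta) = 0$'' is vacuous: $\varphi(u_\beta) = \varphi(k_\beta) + r\,w''_\beta f_{\phi(\beta)}$ lies in $G$ automatically, with no constraint on $\varphi(k_\beta)$. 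You correctly diagnosed that a double-layer element on the $E$-side would push $\pi(G)$ into $\widehat{H}\setminus H$, but the single-layer fix is illusory.

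The paper avoids this by a genuinely different setup: it takes $C = E \oplus H$ (not $E \oplus F$), builds $G \subseteq \widehat{E}\oplus\widehat{H}$ with generators $w_\alpha e_\alpha$, $w'_\alpha(e_1+e_\alpha)$, and $\tilde w_\alpha e_\alpha + h_\beta$, and crucially does \emph{not} include $H$ as a submodule of $G$. The transcendental elements sit on the $E$-side, so the Corner argument forces $\varphi\restr E = r\cdot\id$ directly. The projection $\pi:\widehat{C}\to\widehat{H}$ then satisfies $\pi(G)=H$, and with $K := G\cap\widehat{E}$ one has $K/E$ divisible (it lies in $\widehat{E}/E$) while $G/K\cong H$ is reduced; hence $G/E = K/E \oplus H$, and since $G$ itself is reduced, $(\varphi-r)$ must kill the divisible piece $K/E$, giving $(\varphi-r)(K)=0$. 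The divisible/reduced splitting of $G/E$ is the mechanism that replaces any need to ``encode the $K_0$-relations'' by hand.
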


Note that for $R$-modules of size $<2^{\aleph_0}$
cotorsion-freeness is equivalent to say that the $R$-module is
torsion-free and reduced; see \cite{GT}.

 \Pf We enumerate a generating system $\{h_\beta \mid \beta < \kappa'\}$ of $H$ and choose a surjection
$\psi: \kappa\to \kappa'$. If $C= E \oplus H$, then we define the
desired module $G$ as a pure submodule of the completion $\Chat$, as follows:
 \begin{eqnarray}
 \label{q1}
 G=\langle E, w_\a e_\a,
 w'_\a(e_1 + e_\a), (\tilde{w}_\a e_\a  + h_\beta) \mid
 \alpha < \kappa, \psi(\alpha)=\beta \rangle_*\subseteq \Chat,
 \end{eqnarray}
where  $w_\alpha$, $w'_\alpha$, $\tilde{w}_\alpha \in \hat{R}$
$(\alpha < \kappa)$ is a family of algebraically independent
elements over $C$. Using $|C| < 2^{\aleph_0}$ its existence
follows again from a theorem of G\"obel-May; see \cite[p. 16,
Theorem 1.1.20]{GT}.

Clearly $G$  is $\kappa$-generated as required. We also consider
the projection induced by the decomposition $\Chat = \Ehat \oplus
\Hhat$, which is
$$\pi: \Chat \arr \Hhat \text{ with kernel } \Ehat.$$

We will not  distinguish between $\pi$ and its natural
restrictions to submodules of $\Chat$. If $x\in G$, then there is
$s\in \bS$ such that
\begin{eqnarray}
\label{q2} sx= f + \sum_{\alpha < \kappa} r_\alpha w_\alpha e_\alpha
+ \sum_{\alpha < \kappa} r'_\alpha w'_\alpha (e_1 + e_\alpha)  +
\sum_{\alpha < \kappa,\, \psi(\alpha)=\beta}\tilde{r}_\alpha (\tilde{w}_\alpha e_\alpha
 + h_\beta)
\end{eqnarray}
for some  $f \in E, r_\alpha, r'_\alpha, \tilde{r}_\alpha \in R$.

By definition and continuity of the map $\pi$ it now follows that
$$\pi (sx)= \sum_{\alpha < \kappa,\, \psi(\alpha)=\beta} \tilde{r}_\alpha h_\beta,$$ thus
 \begin{eqnarray} \label{q3} \pi (G)= H  \end{eqnarray}
since $H$ is pure in its completion $\widehat{H}$ and we let
$K:=\Ehat \cap G$. Obviously $K=\Ker(\pi \restr  G)$ and
thus
 \[ E \subseteq K = \{ x \in G \mid \sum_{\alpha < \kappa, \, \psi(\alpha)=\beta}
\tilde{r}_\alpha h_\beta
=0 \mbox{ in equation  (\ref{q2})} \}. \]
This implies that $K/E$ is divisible and the quotient $(G/E)/(K/E) \cong G/K \cong H$ is reduced. There is a decomposition into a divisible summand and a reduced part $H$:
\begin{eqnarray} \label{q4} G/E = (K/E) \oplus H \text{ with } K/E \text{ the maximal divisible summand.} \end{eqnarray}
 Next we establish (ii) and study $\varphi \in \End(G)$. If $\delta < \kappa$,
$\gamma=\psi(\delta)$, then there is $s=s_\delta \in S$ such that

$$ s\varphi(w_\delta e_\delta )=s w_\delta \varphi(e_\delta)= f + \sum_{\alpha < \kappa} r_{\alpha} w_{\alpha} e_{\alpha}   +
\sum_{\alpha < \kappa} r'_{\alpha} w'_{\alpha} (e_1 + e_{\alpha}) +
\sum_{\alpha < \kappa,\,\psi(\alpha)=\beta}\tilde{r}_{\alpha}(\tilde{w}_{\alpha} e_{\alpha} +
h_\beta)$$ for some $f \in E$ and $r_{\alpha},
r'_{\alpha}, \tilde{r}_{\alpha} \in R$.

Again, the sum $\Ehat \oplus
\Hhat$ is direct and $w_{\alpha},w'_{\alpha}, \tilde{w}_{\alpha}$
$(\alpha < \kappa)$ are algebraically independent elements over $C$.
Therefore, equating coefficients we obtain
$s\varphi(e_{\delta})=r_{\delta} e_{\delta}$ and $\varphi$ acts on
$e_{\delta}$ as multiplication by $r_{\delta}$.
A similar argument shows that $\varphi$ also acts on $(e_1 +
e_{\delta})$ as multiplication by some $r'_{\delta}\in R$, for all $\delta < \kappa$. Therefore,
\[ r'_\delta (e_1 + e_\delta)=\varphi(e_1 + e_\delta) =
\varphi(e_1) + \varphi(e_\delta) = r_1 e_1 + r_\delta e_\delta. \]
And comparing components we get $r_\delta=r$ for all $\delta <
\kappa$ (which does not depend on $f$). It follows that $\varphi
\restr E = r\cdot \id$. Using that $G$ is reduced and (\ref{q4}) we
see  that $\varphi - r \cdot \id$ induces a unique homomorphism
$\va_r: H=G/K \rightarrow G$, which shows (ii).

By the assumption of (iii) we note that the induced map $\va_r$ from (ii)
vanishes, thus $\End(G)=R$ in this case and also $\Hom(G,K)=0$.\\

For (iv) we consider any  $\psi \in \Hom(G,H)$. Clearly
\[ \psi(w_{\alpha} e_{\alpha} ) =
w_{\alpha} \psi(e_{\alpha})\in H \cap w_{\alpha} H  \] which is zero
by the algebraically independent element $w_\a$. Hence $\psi$ induces
a homomorphism  $\tilde{\psi}: G/E \rightarrow H$. Since $H$ is
reduced it follows from (\ref{q4}) that $\tilde{\psi}(K/E)=0$ and we
can write $\psi =\pi r$ for some $r \in R$ and $\psi \in \pi R $. The
reverse inclusion for (iv) is trivial. Finally note that $0\arr K\arr
G \arr H\arr 0$ is a cellular exact
sequence by the properties of the maps just studied. \qed\\

Note that there are many examples of rings $R$ and
$\bS$-topologies such that the constructed module $G$ in the above
theorem satisfies $\Hom(H,G)=0$ for given $H$. For instance, if
$H$ is a rank one group that is not a ring, then one could choose
$R=\Z$ and the $\Z$-adic topology on $R$. Consequently, $G$ will
be $\Z$-homogeneous and hence $\Hom(H,G)=0$. This shows
Corollary~5.5 from \cite{FG}.

\subsection{Cokernels of size greater than or equal to the
continuum} We now consider cellular covers of cotorsion-free modules of size $\geq
2^{\aleph_0}$ and utilize the Black Box from \cite{CG} to extend
Theorem \ref{crucial-quotients} to larger cardinals.

Let $\lambda > |R|$ be an infinite cardinal such that $\lambda =
\lambda^{\aleph_0}$. If $\mu$ is any infinite cardinal (and
$\mu^{\aln} >\size{R}$), then $\mu^{\aleph_0}$ is a candidate for
$\lambda$. The cardinal condition ensures that the set of all
countable subsets of $\lambda$ has size $\lambda$ as well. This will
be used to get that the completion of our canonical free base module
$B$ of size $\lambda$ has size $\lambda$ as well. The heart of the
Black Box construction is to build the desired $R$-module on a tree
$T = {}^{\omega > } \lambda $ as its underlying set of `supports'
using the additional geometric structure.\\

Therefore let $T$ be the set of all finite sequences
 $\tau = {\lambda_0}^\wedge \cdots {}^\wedge \lambda_{n-1}$ in
$\lambda$, hence
$$T = \{ \tau : n \arr \lambda : n \in \omega \}. $$
Recall that $\tau$ above is a finite branch of length $n$,
thus $\dom(\tau)= n=\{0,\dots, n-1\}$. Similarly we define
the set $\Br(T)$ of all infinite branches of length $\omega$, which
is
$$\Br(T) = {}^\omega \lambda = \{ v: \omega \arr \lambda \}.$$
This set has cardinality $\lambda$ by assumption on $\lambda$.
Finite and infinite branches $v$ have a canonical {\it support }
which is the subset
$$[v] = \{v\restr m \in T: m  < \mbox{length of $v$}\}$$
of $T$. Note that $[v],[w]$ are almost disjoint, that is $[v]\cap [w]$ is finite,
if and only if $v,w$ are distinct branches.
Trees also have a natural ordering by
extensions as follows: For any $\tau, \nu\in T$
$$ \tau < \nu \iff \tau \subseteq \nu \iff
[\tau] \subseteq [\nu] \mbox{ and } \nu \restr \dom \tau = \tau.
 $$
The {\it norm} of a branch $\tau\in Br(T)$ is defined as
$$||\tau||= \sup (\Im(\tau))\in \lambda.$$

We transport these supports and norms to an $R$-module, taking
$$E = \bigoplus\limits_{\tau \in T} R \tau$$
to be the free $R$-module generated by $T\subset E$ (where $\tau
\in T$ is identified with $1 \tau  \in E$). Any element $g \in
\hE$ can be expressed as a countable sum $g = \sum_{n \in \omega}
g_n \tau_n$ for some $g_n \in \widehat {R}$,
such that for all $m\in \omega$, $g_n \in
q_m\widehat {R}$ for almost all $n \in \omega$. We denote by $[g]
= \{\tau_n : g_n \neq 0 , n \in \omega \}$ the {\it support } of
$g$. If $v$ is an infinite branch, then we also denote by $v =
\sum_{n \in \omega } q_n (v\restr n) \in \hE$ and call this element
a {\it branch-element}, which obviously has the same support as the
branch $v$, namely $[v]$. These branch-elements will be useful tools
to recognize elements of the module $G$ under construction.\\

Let $H$ be a cotorsion-free $R$-module of size less than $\lambda$
and put $B:=E \oplus H$. As before let $\pi: \hB \rightarrow \hH$
be the canonical projection onto $\hH$. As in the previous section
our goal is to construct $G \subset_* \hB$ such that $\pi(G)=H$,
$\Hom(G,H)= \pi R$ and $\Hom(G,K)=0$, where $K=\hat{E} \cap
G=\Ker(\pi\restr G)$. Note that in this case $0 \rightarrow K
\rightarrow G \rightarrow H \rightarrow 0$ is a cellular cover
over $H$. In order to ensure these properties we need to satisfy
two requirements during the construction: $G \cap \hH=0$ and
$\End(G)=R$. Thus we will have
\begin{eqnarray}\label{sandw} E \subseteq_*
G \subseteq_* \hB
\end{eqnarray} but $H$ must not be contained in $G$. This
forces us to adjust Shelah's Black Box to (\ref{sandw}). Thus, if
$g+h \in \hB=\hE \oplus \hH$, we let $[g+h]:=[g]$ be the support
of $g+h$ just looking at the $\hE$-component of elements.

The notions of support and norm extend naturally to subsets $X$ of
$\hB$ as follows: $$[X] = \bigcup \{ [x] : x \in X \}$$ and
$$||X|| = \sup \{ ||\tau|| : \tau\in [X]\}.$$
Note that $||X||$ is an ordinal which is strictly less than
$\lambda$ if $[X]$ is countable, because the cofinality
$\cf(\lambda )$ of $\lambda$ is greater than $\aleph_0$.

The classical Black Box also needs the notion of traps which are
partial approximations to endomorphisms $\hB \to \hB$. In our
case, we will approximate homomorphisms of the form $\varphi : \hE
\oplus H \arr \hB$.

As before let ${}^{\omega > }\omega$ be the countable tree of
finite sequences in $\omega $ and let $f: {}^{\omega > }\omega
\arr T$ be a tree embedding. Let $\varphi: \dom(\varphi)
\rightarrow \hB$ denote a partial homomorphism with countable
domain $\dom(\varphi) \subseteq_* \hB$ a pure submodule of $\hE
\oplus H$ and suppose that $[\dom(\varphi) ] $ is a countable
subtree of $T$. We will
require

${} \hfill \Im f \subseteq [\dom(\varphi)] \subseteq T   \hfill
(f,\varphi)$

\noindent and call $(f,\varphi)$ a {\it trap}.

By our assumptions on the cardinal $\lambda$ it is clear that the
number of traps is $\lambda$, hence the following theorem is an easy
modification of Shelah's classical Black Box, see the
appendix of \cite{CG}. Recall that for an ordinal $\rho$ we let
$\rho^o=\{ \delta < \rho : \cf(\delta)=\omega \}$.

\begin{theorem}[Shelah's Black Box]
\label{BB} Let $\lambda =  \lambda^{\aleph_0} \geq |R|$ be an
infinite cardinal
 and $T = {}^{\omega > } \lambda $ be a tree which is the basis of a free
$R$-module $B = \bigoplus\limits_{\tau \in T} R \tau$. Moreover,
let $H$ be a cotorsion-free $R$-module of size less than $\lambda$
and let $\rho=\cf(\lambda)$. For any choice of disjoint stationary
subsets $S_1, S_2 \subseteq \rho$ there exists an ordinal
$\lambda^*$ of cardinality $\lambda$ and a list of traps
$$ (f_\alpha, \varphi_\alpha), \ \ \alpha \in \lambda^*$$
with the following properties:
\begin{itemize}
\item[(a)]  $||\Dom(\varphi_\alpha)|| \in \rho^o$ is a limit ordinal
with
$||v|| = ||\Dom(\varphi_\alpha)||$ for all $v \in \Br(\Im f_\alpha)$.
\item[(b)] If $\beta < \alpha \in \lambda^*$ then
$||\Dom(\varphi_\beta)|| \leq ||\Dom(\varphi_\alpha)||$ and
$\Br(\Im f_\beta)\cap \Br(\Im f_\alpha) =
\emptyset$.
\item[(c)]  If $\beta + 2^{\aleph_0} \leq \alpha $, then
$\Br(\Dom(\varphi_\beta)) \cap \Br(\Im f_\alpha) = \emptyset$.
\item[(d)] (PREDICTION) If $X$ is a countable subset of $\hE
\oplus H$ and $\varphi : \hE \oplus H \arr \hB$ is a homomorphism,
then there exist ordinals $\alpha_1, \alpha_2 \in \lambda^*$ such
that $X \subseteq \Dom(\varphi_{\alpha_i})$ and $\varphi \restr
\Dom( \varphi_{\alpha_i}) = \varphi_{\alpha_i}$ and $||
\Dom(\varphi_{\alpha_i})|| \in S_i$ for $i=1,2$.
\end{itemize}
\end{theorem}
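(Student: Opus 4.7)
\Pf The plan is to adapt Shelah's classical Black Box (as developed in the appendix of \cite{CG}) to the present setting, where the traps now predict partial homomorphisms from $\hat E\oplus H$ into $\hat B=\hat E\oplus\hat H$ rather than endomorphisms of $\hat B$. Because $|H|<\lambda$ and $\lambda^{\aleph_0}=\lambda$, the presence of the $H$-summand affects neither the counting of traps nor the combinatorial prediction argument, so only the bookkeeping has to be updated.

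First I would verify that the set of candidate traps has cardinality exactly $\lambda$. A partial domain $D\subseteq_*\hat E\oplus H$ with $[D]$ a countable subtree of $T$ is determined by a countable sequence in $\hat B$, and since $|\hat R|\le 2^{\aleph_0}$ and $|H|<\lambda$ we have $|\hat B|\le\lambda^{\aleph_0}=\lambda$, giving at most $\lambda^{\aleph_0}=\lambda$ possibilities for $D$. Likewise a tree embedding $f:{}^{\omega>}\omega\to T$ is determined by countably many vertices in $T\subseteq\lambda$, and for a fixed countable $D$ the number of homomorphisms $D\to\hat B$ is at most $|\hat B|^{\aleph_0}=\lambda$. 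Hence the set of candidate traps $(f,\varphi)$ has size $\lambda$, and I can enumerate them along an ordinal $\lambda^*$ of cardinality $\lambda$.

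Next I would arrange the enumeration so that (a)--(c) hold. For each $\delta\in\rho^o$ I group together the candidate traps $(f,\varphi)$ for which $||\dom(\varphi)||=\delta$ and every $v\in\Br(\Im f)$ satisfies $||v||=\delta$. Using the disjoint stationary sets $S_1,S_2\subseteq\rho$, I split these classes according to whether $\delta\in S_1$ or $\delta\in S_2$, and process the traps in increasing order of $\delta$. At stage $\alpha$ I retain the trap only if $\Br(\Im f_\alpha)$ is disjoint from each $\Br(\Im f_\beta)$ with $\beta<\alpha$, and from each $\Br(\dom(\varphi_\beta))$ with $\beta+2^{\aleph_0}\le\alpha$; since fewer than $\lambda$ branches have been forbidden at stage $\alpha$ and $|\Br(T)|=\lambda$, a fresh trap exists whenever needed. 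This yields clauses (a), (b), (c).

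The main technical step, and I expect the principal obstacle, is the prediction clause (d). Given a countable $X\subseteq\hat E\oplus H$ and a homomorphism $\varphi:\hat E\oplus H\to\hat B$, I would use $\cf(\lambda)=\rho>\aleph_0$ together with stationarity of $S_i$ to pick $\delta\in S_i$ strictly above the ordinal $\max\{||X||,||\varphi(X)||\}$. I would then choose a strictly increasing cofinal $\omega$-sequence in $\delta$ and close $X$ under purity in $\hat E\oplus H$ together with enough countable data to obtain a pure countable submodule $D\supseteq X$ with $||D||=\delta$, setting $\varphi_{\alpha_i}:=\varphi\restr D$. A tree embedding $f_{\alpha_i}:{}^{\omega>}\omega\to[D]$ with branch norms equal to $\delta$ is chosen to avoid the previously listed branches; the freshness budget consumed is again smaller than $\lambda$, so such $f_{\alpha_i}$ exists. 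The resulting pair $(f_{\alpha_i},\varphi_{\alpha_i})$ appears in the enumeration at an index $\alpha_i$ with $||\dom(\varphi_{\alpha_i})||=\delta\in S_i$, establishing (d).\Fp
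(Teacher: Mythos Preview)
The paper does not actually prove this theorem: immediately before the statement it only remarks that ``the number of traps is $\lambda$, hence the following theorem is an easy modification of Shelah's classical Black Box, see the appendix of \cite{CG}.'' Your sketch therefore goes well beyond what the paper offers, and your counting paragraph is exactly the one observation the authors do make (with one slip: the bound $|\widehat R|\le 2^{\aleph_0}$ is not valid in general, since only $|R|\le\lambda$ is assumed; the correct estimate is $|\widehat R|\le |R|^{\aleph_0}\le\lambda$, which still gives $|\widehat B|\le\lambda$).

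Where your outline would run into trouble, were one to carry it out in detail, is the interaction between your second and third paragraphs. You propose to \emph{discard} a candidate trap at stage $\alpha$ whenever its branch set meets previously used branches, and then in the prediction step you build a specific trap $(f,\varphi\restr D)$ from the given data and assert that it ``appears in the enumeration.'' But after thinning there is no guarantee that this particular pair survived. In the standard argument (as in \cite{CG} or \cite{GT}) one does not discard $(f,\varphi)$ pairs; rather, each admissible $\varphi$-part is listed with continuum-many distinct tree embeddings $f$, and at stage $\alpha$ one \emph{selects} an $f_\alpha$ whose branches avoid the at most $|\alpha|\cdot 2^{\aleph_0}<\lambda$ forbidden branches, without touching the $\varphi$-component. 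Prediction (d) then follows from a closure/elementary-submodel argument together with stationarity of $S_i$, showing that the restriction $\varphi\restr D$ coincides with some $\varphi_{\alpha_i}$ already in the list. Your sketch has the right ingredients but conflates the choice of $f_\alpha$ with a filtering of the whole trap, which as written leaves a gap between (b)--(c) and (d).
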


In the classical Black Box it is used that mappings from a free
$R$-module $B$ to $B$ have unique extensions to mappings from
$\hB$ to $\hB$. We still want to argue with unique extensions of
mappings which explains the following observation.

\begin{lemma}
\label{extension} Let $\varphi: \dom(\varphi) \rightarrow \hE
\oplus H$ be such that $E' \subseteq_* \dom(\varphi) \subseteq_*
\hE \oplus H'$ and $\pi(\dom(\varphi))=H'$ for some submodule $H'
\subseteq_* H$ and direct summand $E' \subseteq E$. Then $\varphi$
has a unique extension $\widehat{\varphi} : \hE' \oplus H'
\rightarrow \hE \oplus \hH=\hB$.
\end{lemma}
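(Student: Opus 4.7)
The plan is to build $\widehat\varphi$ in two stages: first extend $\varphi|_{E'}$ to $\hE'$ purely by $\bS$-adic continuity, and then use the surjectivity $\pi(\dom(\varphi))=H'$ to define the extension on the $H'$-factor by a choice-of-lift procedure. I will lean on three standing facts: every $R$-module homomorphism is automatically $\bS$-adically continuous (because $\varphi(s_nM)\subseteq s_nN$); $\hB$ is Hausdorff and $\bS$-complete as the completion of $B=E\oplus H$; and purity of $\dom(\varphi)$ in $\hE\oplus H'\subseteq\hB$ makes the intrinsic $\bS$-adic topology on $\dom(\varphi)$ coincide with the subspace topology from $\hB$, so that Cauchy sequences inside $\dom(\varphi)$ may be moved freely through $\varphi$. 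Since $E'$ is a direct summand of $E$, it is pure in $E$ and hence in $\hB$, and its $\bS$-adic closure in $\hB$ is exactly $\hE'$; therefore $\varphi|_{E'}\colon E'\to\hB$ extends uniquely by continuity to a module homomorphism $\varphi_0\colon\hE'\to\hB$.

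The crux is to verify that $\varphi_0$ and $\varphi$ agree on $\dom(\varphi)\cap\hE'$. Given $x$ in this intersection, purity of $E'$ in $\dom(\varphi)$ together with purity of $\dom(\varphi)$ in $\hB$ produces a sequence in $E'$ that is $\bS$-Cauchy both in $\hB$ and in $\dom(\varphi)$ and converges to $x$; continuity of $\varphi$ and $\varphi_0$ together with Hausdorffness of $\hB$ then forces $\varphi(x)=\varphi_0(x)$. (This step tacitly uses $\dom(\varphi)\cap\hE\subseteq\hE'$, i.e.\ that $E'$ covers the $T$-support of $\dom(\varphi)$, which is the intended Black-Box setting.) With this in hand, I define for each $h\in H'$ some lift $y_h=e_h+h\in\dom(\varphi)$ with $e_h\in\hE'$ (possible by $\pi(\dom(\varphi))=H'$), and put $\tilde\varphi(h):=\varphi(y_h)-\varphi_0(e_h)$. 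Independence of the choice is immediate: another lift $y_h'=e_h'+h$ gives $y_h-y_h'=e_h-e_h'\in\dom(\varphi)\cap\hE'$, so by the agreement just proved the two candidate values coincide. Additivity and $R$-linearity of $\tilde\varphi$ follow by taking $y_{h_1+h_2}=y_{h_1}+y_{h_2}$ and $y_{rh}=ry_h$.

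Finally, set
\[
\widehat\varphi(a+h):=\varphi_0(a)+\tilde\varphi(h)\qquad(a\in\hE',\ h\in H').
\]
This is an $R$-module homomorphism $\hE'\oplus H'\to\hB$, and for $x=e+h\in\dom(\varphi)$ one has $\widehat\varphi(x)=\varphi_0(e)+\tilde\varphi(h)=\varphi_0(e)+\varphi(x)-\varphi_0(e)=\varphi(x)$, so $\widehat\varphi$ extends $\varphi$. For uniqueness, any extension $\psi$ restricts to a module homomorphism $\hE'\to\hB$, hence is $\bS$-adically continuous, agrees with $\varphi$ and therefore with $\varphi_0$ on the dense submodule $E'\subseteq\hE'$, and so equals $\varphi_0$ on all of $\hE'$ by Hausdorffness of $\hB$; and for $h\in H'$ the value $\psi(h)=\psi(y_h)-\psi(e_h)=\varphi(y_h)-\varphi_0(e_h)=\tilde\varphi(h)$ is forced. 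The main obstacle is the agreement of $\varphi_0$ with $\varphi$ on $\dom(\varphi)\cap\hE'$; this is precisely the point at which purity of $E'$ in $\dom(\varphi)$ and of $\dom(\varphi)$ in $\hB$ are both indispensable, in order to synchronise the $\bS$-adic and subspace topologies and conclude by density and Hausdorffness.
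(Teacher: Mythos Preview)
Your proof is correct and follows essentially the same two-stage strategy as the paper: extend $\varphi|_{E'}$ to $\hE'$ by $\bS$-adic continuity, then define the extension on $H'$ via a choice-of-lift formula and check well-definedness using the agreement on $\dom(\varphi)\cap\hE'$. The paper dispatches the general-$E'$ case with a brisk ``without loss of generality $E'=E$'' (invoking that completion commutes with finite direct sums), whereas you correctly isolate the tacit hypothesis $\dom(\varphi)\subseteq\hE'\oplus H'$ needed for the lift to land in $\hE'$---this is more transparent and matches the Black Box application, where the support of $\dom(\varphi_\alpha)$ lies in the countable subtree generating $E'$.
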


\proof Since the completion commutes with finite direct sums we may
assume without loss of generality that $E'=E$. Moreover, since
$\dom(\varphi)$ is pure in $\hE \oplus H'$ it follows that $E
\subseteq_* \dom(\varphi) \cap \hE$ is pure in $\hE$. Hence there is
a unique map $\hv_{E} : \hE \rightarrow \hE \oplus \hH=\hB$ such that
$\hv_{E} \restr {\dom(\varphi) \cap \hE} = \varphi \restr
{\dom(\varphi) \cap \hE}$. Let $h \in H'$. Then $\hb + h \in
\dom(\varphi)$ for some $\hb \in \hE$ since $\pi(\dom(\varphi))=H'$.
Define
\[ \hv_{H'} : H' \rightarrow \hE \oplus \hH=\hB \textrm{ via
} h \mapsto \varphi(\hb + h) - \hv_{E}(\hb).
\]
We claim that $\hv_{H'}$ is a well-defined homomorphism. Assume
first that there are $\hb_1, \hb_2 \in \hB$ such that $\hb_1+h$
and $\hb_2+h$ are in $\dom(\varphi)$. Then
\[ \varphi(\hb_1 +h)-\hv_E(\hb_1) -
(\varphi(\hb_2+h)-\hv_E(\hb_2)) = \]
\[ =\varphi(\hb_1+h-(\hb_2+h)) - \hv_E(\hb_1-\hb_2) =
\varphi(\hb_1-\hb_2) - \hv_E(\hb_1-\hb_2) = 0 \] since
$\hb_1-\hb_2 \in \dom(\varphi) \cap \hE$. Hence $\hv_{H'}$ is well-defined.\\
Now assume that $h_1,h_2 \in H'$ and $\hb_1,\hb_2,\hb_3 \in \hB$ are
such that $\hb_1+h_1, \hb_2 + h_2, \hb_3 + h_1+h_2 \in
\dom(\varphi)$. It follows that
\[ \hv_{H'}(h_1+h_2) - \hv_{H'}(h_1) - \hv_{H'}(h_2) = \]
\[= \varphi(\hb_3-\hb_2-\hb_1) - \hv_E(\hb_3-\hb_2-\hb_1) =0 \]
since again $\hb_3-\hb_2-\hb_1 \in \dom(\varphi) \cap \hE$. Hence
$\hv_{H'}$ is a homomorphism.\\ Define $\hv : \hE \oplus H'
\rightarrow \hE \oplus \hH$ by $\hv(\hb + h):=\hv_E(\hb) +
\hv_{H'}(h)$. Then
clearly $\hv$ extends $\varphi$.\\
Finally, assume that $\hp$ extends $\varphi$ too. Therefore
$\hv\restr E=\hp\restr E=\varphi \restr E$, and
hence uniqueness of $\hv_E$ implies that $\hv_E = \hp
\restr {\hE} = \hv \restr {\hE}$. If $h \in H'$, then
$\hb+h \in \dom(\varphi)$ for some $\hb \in \hE$ and hence
\[ \hv(\hb +h)=\hv_E(\hb) + \hv_{H'}(h)=\varphi(\hb + h)\] and
similarly, \[ \hp(\hb+h)=\hv_E(\hb)+\hp(h)=\varphi(\hb+h).\]
Therefore $\hp(h)=\hv_{H'}(h)$ and so $\hv$ and $\hp$ coincide on
$\hE$ and on $H'$ and are thus identical. Uniqueness of $\hv$
is shown.\qed\\

We now want to apply the Black Box to show the following

\begin{theorem}\label{main2}  Let $R$ be a commutative, torsion-free ring with $1\ne 0$ and $\k$ a cardinal with $\kappa^{\aleph_0} > |R|$. Moreover, let $H$ be cotorsion-free $R$-module such that $\End(H)=R$ and $|H| \leq \kappa$. Then there is a
cotorsion-free $R$-module $G$ of size $\size{G} = \k^{\aln}$ with the following properties.
\begin{enumerate}
\item There is a submodule $K\subseteq G$ with $G/K=H$. \item If
$\va\in \End(G)$, then there is a unique $r\in R$ such that
$(\va-r\cdot \id_G)(K) =0$, so there is an induced homomorphism
$\va_r: H \arr G \ ((g + K) \mapsto (\va-r)g)$. \item If
$\Hom(H,M)=0$ for every $\aleph_0$-free module $M$, then
$\End(G)=R$, and $\Hom(G,K)=0$. \item $\Hom(G,H) =\pi R$ where
$\pi: G\arr H \ (g\mapsto g+K) $ is the canonical epimorphism.
\end{enumerate}
In particular, if $Hom(H,M)=0$ for all $\aleph_0$-free modules $M$ then
$0 \arr K\arr G\arr H\arr 0$ is
a cellular exact sequence.
\end{theorem}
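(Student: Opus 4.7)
The plan is to adapt the argument of Theorem \ref{crucial-quotients} to large cardinals by replacing the algebraically independent elements of $\widehat{R}$ with branch-elements in the completion of a large free module, and invoking Shelah's Black Box (Theorem \ref{BB}) in place of the G\"obel--May existence theorem. Put $\lambda = \kappa^{\aleph_0}$, form the tree $T = {}^{\omega > }\lambda$ and the free module $E = \bigoplus_{\tau \in T} R\tau$, and set $B = E \oplus H$. The module $G$ will be a pure submodule of $\widehat{B}$ containing $E$ with $\pi(G) = H$, where $\pi : \widehat{B} \to \widehat{H}$ is the projection along $\widehat{E}$. Then $K := G \cap \widehat{E} = \Ker(\pi \restr G)$ establishes (i).

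First I would force $\pi(G) = H$ by enumerating $H = \{ h_\eta : \eta < |H| \}$, picking for each $\eta$ a branch-element $v_\eta$ whose underlying branch-supports form an almost-disjoint family in $T$, and declaring $v_\eta + h_\eta \in G$; this is possible since $|H| \leq \kappa \leq \lambda$. Next, apply Theorem \ref{BB} with two disjoint stationary subsets $S_1, S_2 \subseteq \cf(\lambda)^o$ and enumerate the traps $(f_\alpha, \varphi_\alpha)_{\alpha < \lambda^\ast}$. By transfinite recursion I decide, for each trap, whether to adjoin a branch-element $v_\alpha \in \Br(\Im f_\alpha)$ to the generating set of $G$: I do so precisely when $\varphi_\alpha$ fails to be compatible with scalar multiplication by some $r \in R$ on $\Dom(\varphi_\alpha) \cap \widehat{E}$ (in the $S_1$ case) or with the form $\pi r$ (in the $S_2$ case). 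Let $G$ be the pure closure in $\widehat{B}$ of $E$ together with all $v_\eta + h_\eta$ and all chosen $v_\alpha$; clauses (b)--(c) of the Black Box ensure $|G| = \lambda = \kappa^{\aleph_0}$ and that $G$ remains cotorsion-free.

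For the verification, given any $\varphi \in \End(G)$, Lemma \ref{extension} produces a unique extension $\widehat{\varphi} : \widehat{E} \oplus H \to \widehat{B}$. By the PREDICTION clause, every countable portion of $\widehat{\varphi}$ is caught by an $S_1$-trap; a support/norm analysis at each such trap shows that the adjoined $v_\alpha$ would force $\widehat{\varphi}(v_\alpha) \notin G$ unless $\varphi \restr E$ agrees with multiplication by some fixed $r \in R$. Consequently $(\varphi - r \cdot \id)(E) = 0$ and hence, by purity of $K$ in $\widehat{E}$ and cotorsion-freeness, $(\varphi - r \cdot \id)(K) = 0$, yielding (ii). Exactly the same strategy applied to $\psi \in \Hom(G, H)$ with $S_2$-traps gives $\psi \in \pi R$, which is (iv). Under the extra hypothesis of (iii), $K$ is $\aleph_0$-free as a pure submodule of $\widehat{E}$, so $\Hom(H, K) = 0$; combined with (iv) this forces the induced $\varphi_r : H \to G$ to vanish, giving $\End(G) = R$, and a parallel argument yields $\Hom(G, K) = 0$. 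The final cellular-cover conclusion is then immediate from Proposition \ref{endo1}.

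The main obstacle is the ``kill step'' inside the trap analysis: proving that if $\varphi_\alpha$ is bad, then $\widehat{\varphi}(v_\alpha) \notin G$. One shows that $\widehat{\varphi}(v_\alpha)$ has support reaching above $\|\Dom(\varphi_\alpha)\|$, whereas any element of the pure closure $G$ whose support reaches that high must be assembled from branch-elements $v_\beta$ with $\beta \neq \alpha$ together with the coupling elements $v_\eta + h_\eta$; clauses (b)--(c) of the Black Box together with the almost-disjointness of the chosen branch-supports rule out such an assembly. The new feature, compared with the classical cotorsion-free realization theorems in \cite{CG}, is the $H$-component present in both the domain and the codomain of $\widehat{\varphi}$, which is controlled by Lemma \ref{extension} and by the hypothesis that $H$ is cotorsion-free.
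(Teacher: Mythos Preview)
Your overall framework---the tree $T$, the base $B=E\oplus H$, the two stationary sets, and the use of Lemma~\ref{extension}---matches the paper's. But the ``kill step'' as you describe it does not work, and this is where the actual content of the proof lies.

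You claim that for a bad trap one simply adjoins a branch-element $v_\alpha\in\Br(\Im f_\alpha)$ and then argues that $\widehat\varphi(v_\alpha)$ has support reaching above $\|\Dom(\varphi_\alpha)\|$. This is false in general: nothing prevents $\varphi$ from agreeing with some scalar $r\in R$ on every node of $[v_\alpha]$ while disagreeing with all scalars on some other element of $E$. In that case $\widehat\varphi(v_\alpha)=rv_\alpha\in\langle G,v_\alpha\rangle_*$, and you have killed nothing. The paper handles this with two Step-Lemmas (Lemmas~\ref{killing-one} and~\ref{killing-two}): given a bad $\varphi$ one first manufactures a specific witness $x\in\widehat E\oplus H'$ (using constant branches and, for the divisibility condition~(\ref{D2}), anti-branches) with $\varphi(x)\notin\langle G_\alpha,x\rangle_*$, and then sets $g_\alpha=x+v$ or $g_\alpha=v$ for a suitable $v\in\Br(\Im f_\alpha)$. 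The branch $v$ alone carries no information about the badness of $\varphi$; the witness $x$ must be smuggled in.

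A second point you do not address is preservation: when you adjoin $g_\alpha$ at stage $\alpha$, you must ensure that for every earlier bad $\beta$ the element $\varphi_\beta(g_\beta)$ still fails to lie in $G_{\alpha+1}=\langle G_\alpha,g_\alpha\rangle_*$. Clauses (b)--(c) of the Black Box are necessary here but not sufficient; the paper (Lemma~4.6) uses that $\Br(\Im f_\alpha)$ has $2^{\aleph_0}$ branches and a pigeonhole argument to select $v$ so that no earlier kill is undone. Your almost-disjointness remark gestures at this but does not supply the argument.

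Finally, your deduction of (iii) is slightly off: you show $\Hom(H,K)=0$, but $\varphi_r$ maps $H$ into $G$, not into $K$. The paper arranges each $G_\alpha$ (hence $G$) to be $\aleph_0$-free, so that the hypothesis gives $\Hom(H,G)=0$ directly; knowing only that $K$ is $\aleph_0$-free is not enough.
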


\proof

Let $\lambda:=\kappa^{\aleph_0}$ and $E$, $T$ and $B$ be as above.
Then $\lambda^{\aleph_0}=\lambda$ is as required for the Black
Box. The module will be the union $G =\bigcup_{\alpha \in \lambda}
G_\alpha $ of an ascending, continuous chain of cotorsion-free,
$\aleph_0$-free modules  $G_\alpha$ $$E \subseteq_* G_\alpha \subseteq_* \hB
\mbox{ and } \pi(G_{\alpha}) \subseteq H.$$

Recall that $\pi: \hB \rightarrow \hH$ is the canonical projection.
Let $S_1$ and $S_2$ be two disjoint stationary subsets of $\rho^o$
where $\rho=\cf(\lambda)$ and such that the Black Box \ref{BB} holds
for $S_1,S_2$. We begin with $G_0 \subseteq_* \hE \oplus H$ such that
$G_0$ is free with $\pi(G_0)=H$. It is easy to construct $G_0$ by
adding elements of the form $v_h+h$ to $E$ where $h \in H$ and the
$v_h$'s form a set of almost disjoint infinite branches (see also
below for more details). By continuity we only need to consider the
inductive step at any $\alpha \in \lambda^*$. We want to find
$g_\alpha \in \hE \oplus H \subseteq \hB$ such that
$$G_{\alpha + 1} = \langle G_\alpha, g_\alpha \rangle_*$$
and the new element $g_\alpha$ must fulfill several tasks.

First we require that the new element $g_\a$ is a 'branch-like'
element of $\hB$: Recall that any branch $v \in \Br(\Im f_\alpha)$ has norm
$||v||=||\Dom(\varphi_\alpha)||$, by (a) of Black Box \ref{BB}, and
gives rise to a non-constant, branch-element, which we also denoted by $v$. Any
sum $b + v$ with
$$b \in \widehat{\Dom(\varphi_\alpha)} \mbox{ and } ||b|| < ||v||$$
is called a {\it branch-like element}. The point is that a
branch-like element has a support which at the top looks like a
branch from $\Br(\Im f_\alpha)$. The choice of generators for $G$
helps to describe the action of homomorphisms on $G$.  We are
ready for two preliminary Step-Lemmas. The first one will be
used to ``kill'' non-inner endomorphisms,
and the second to ``kill'' homomorphisms $\varphi: G\to H$
with no $\widetilde \varphi:H\to H$ such that $\varphi=\widetilde\varphi \pi$.

\begin{lemma}\label{killing-one} Let $G$ be the module constructed so far with
\begin{enumerate}
\item $E \subseteq_* G \subseteq_* \hB$
\item $\pi(G):=H' \subseteq H$.
\end{enumerate}
Let $\varphi \in \End(G)$ such that $\varphi \restriction E \not
\in R$. For any $0 \not= h \in H'$ there is an $x' \in \hE$ such
that $x=x' + h$ satisfies
$$ \varphi (x) \not\in \langle G, x \rangle_*.$$ Moreover, the module $G'=\left< G, x
\right>_*$ is cotorsion-free, $\aleph_0$-free, and $\pi(G')
\subseteq H$.
\end{lemma}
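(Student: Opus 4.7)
The plan is to exploit the $2^{\aleph_0}$ infinite branches available in $\Br(\Im f_\alpha)$ together with the hypothesis $\varphi\restr E\notin R$. Concretely, since $\varphi\restr E$ is not multiplication by any element of $R$, we find a concrete witness: either a single $\tau_0\in T$ with $\varphi(\tau_0)\notin R\tau_0$, or two tree elements $\tau_1,\tau_2\in T$ on which $\varphi$ acts by distinct scalars. By Lemma \ref{extension}, $\varphi$ extends uniquely by continuity to a map whose domain covers any $x = x'+h$ with $x'\in\hE$, so $\varphi(x)$ is well-defined in $\hB$.

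The candidate element is a branch-like $x = b+v+h$ with $v\in\Br(\Im f_\alpha)$ chosen to pass through the witnesses, and $b\in\widehat{\Dom(\varphi_\alpha)}$ of norm smaller than $||v||$. Suppose for contradiction that every such $v$ yields a relation $s_v\varphi(x)=g_v+r_vx$ with $s_v\in\bS$, $g_v\in G$, $r_v\in R$. Pigeonholing on $\bS$ (of cardinality $\aleph_0<2^{\aleph_0}$) extracts $2^{\aleph_0}$ branches with a common $s$; picking two such branches $v,v'$ sharing a common initial segment and multiplying through suitably, the terms involving $h,\varphi(h)$, and the common $b$ cancel and we are left with
\[
s\varphi(v-v') - r_vv + r_{v'}v'\in G + (r_v-r_{v'})R(h+b).
\]
Writing $v-v'=\sum_{n>k}q_n((v\restr n)-(v'\restr n))$ for the common prefix length $k$ and applying $\varphi$ termwise, the non-scalar witness forces the $\hE$-support of the left-hand side to be nontrivial at tree elements of norm approaching $||v||$, whereas by Black Box clauses (a)--(c) the right-hand side has support strictly below $||v||$; this is the contradiction.

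The moreover clause is routine: $G'=\langle G,x\rangle_*$ is cotorsion-free as a pure submodule of the cotorsion-free completion $\hB$; it is $\aleph_0$-free because the $\hE$-support $[v]$ of the new generator is almost disjoint from the supports of any finite subset of $G_\alpha$ above $||\Dom(\varphi_\alpha)||$ by Black Box clause (c); and $\pi(G')\subseteq H$ because $\pi(x)=h\in H'\subseteq H$ and $\pi(G)\subseteq H$ by inductive hypothesis. The hard part will be the support-tracking of the displayed relation: one must verify that the non-scalar witness really does propagate through the continuous extension of $\varphi$ to an unbounded high-norm obstruction in $\hE$, and that the right-hand side remains bounded strictly below $||v||$ even when $r_v\neq r_{v'}$ — both rely crucially on the almost-disjointness encoded in Black Box clauses (a)--(c).
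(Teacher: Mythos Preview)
Your subtraction argument has a genuine gap. After pigeonholing on $\bS$ and subtracting the relations for two branches $v,v'\in\Br(\Im f_\alpha)$ with common $b$, the displayed relation becomes
\[
s\varphi(v-v') - r_v v + r_{v'} v' \in G + (r_v - r_{v'})R(b+h),
\]
and you claim the left side has support of norm approaching $||v||$ ``because of the non-scalar witness''. But the witness that $\varphi\restr E\notin R$ is located at one or two \emph{fixed} tree elements $\tau_i$; it says nothing about $\varphi(v\restr n)$ or $\varphi(v'\restr n)$ for large $n$. Nothing prevents $\varphi$ from acting as a single scalar on the entire subtree $\Im f_\alpha$ while being non-scalar elsewhere on $E$; in that case $s\varphi(v-v')$ is just a scalar multiple of $v-v'$ and no high-norm obstruction arises. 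Your suggestion to choose branches ``passing through the witnesses'' cannot repair this: branches of $\Im f_\alpha$ are constrained to the range of the fixed embedding $f_\alpha$, so the $\tau_i$ need not lie on any of them, and even if they did they would sit in a bounded initial segment that cancels when you subtract two branches sharing that segment. A second problem is the assertion that the right-hand side has support strictly below $||v||$: elements $g_v,g_{v'}\in G=G_\alpha$ may themselves carry branch-like pieces from earlier stages $\beta$ with $||\Dom(\varphi_\beta)||=||\Dom(\varphi_\alpha)||$, so Black Box clauses (a)--(c) do not give the strict bound you need.

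The paper proceeds quite differently and does not use the Black Box branches at all in this lemma. It first uses cotorsion-freeness of $G$ together with $\varphi\restr E\notin R$ to build a countable $D\subseteq E$ such that $(q_n\varphi - r)\widehat D\not\subseteq G$ for \emph{every} relevant pair $(n,r)$; the non-scalar behaviour is thus captured once and for all in $D$, not spread along a branch. (The case $n>0$, $r\notin q_nR$ requires a separate anti-branch construction that your sketch does not touch.) It then picks a fresh \emph{constant} branch $w$ with $||D||,||\varphi(D)||<||w||$, tries $x=w+h$, and if that fails via a specific relation $q_n\varphi(x)-rx\in G$, corrects by a single $d\in\widehat D$ with $(q_n\varphi-r)d\notin G$; a support argument using $w$ as a marker finishes. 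As a minor aside, your justification of the ``moreover'' clause is also off: $\hB$ is a completion and is \emph{not} cotorsion-free, so ``pure in $\hB$'' does not yield cotorsion-freeness of $G'$; one argues via $\aleph_0$-freeness instead.
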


\proof First note that $\varphi$ has a unique extension (again
denoted by $\varphi$) to $\varphi: \hE \oplus H' \rightarrow \hB$
by Lemma \ref{extension} which is again not in $R$. Thus $\varphi
(x)$ is a well-defined element in $\hB$ for any choice of $x \in
\hE \oplus H'$. First we use the assumption on $\varphi$ to show
that there exists a countable subset
\begin{eqnarray} \label{D1}
 C \subseteq T
\mbox{ such that } D = \langle C \rangle \mbox{ satisfies }
(\varphi -r)\widehat{D} \not\subseteq G \mbox{ for all } r \in R.
\end{eqnarray}

Choose a `constant branch' $v: \omega \arr \{\eta \} $ at some
$\eta < \lambda$ and let $C'$ be a countable subset of $T$ such
that the branch-element $v$ belongs to $\widehat{D'}$ where $D' =
\langle C' \rangle$. If (\ref{D1}) fails for $D = D'$, there is $r
\in R$ with $(\varphi - r)\widehat{D'} \subseteq G$. This implies
that $(\varphi-r)\widehat{D'}=0$. Indeed, suppose
that for some $z\in \widehat D'$ we have $(\varphi -r) z \neq 0$, then, since  $\widehat R
\widehat D' \subseteq \widehat D'$, we would have a non-trivial homomorphism $\widehat R
\to G$ given by $\hat r \mapsto (\varphi -r) (\hat r z)$ which is impossible because
$G$ is cotorsion-free. Hence $(\varphi - r) \widehat D'=0$ as desired.

But
$\varphi\restriction E \not\in R$, then there is $x \in E$ such that
$(\varphi - r)x \not= 0$. Enlarge $C'$ to $C$ such that $x \in D =
\langle C \rangle$.
As before, using cotorsion-freeness of $G$ we have $(\varphi - r') \widehat D=0$.
Hence $(\varphi - r)v=0$ and $(\varphi - r')v=0$, which
implies $(r-r)v=0$, and thus $r=r'$ since $\widehat B$
is torsion-free. In particular $(\varphi -r)x=0$,
which is a contradiction.

Here is an alternative support argument to prove (\ref{D1}).
We include this at this point as similar arguments will be needed several times later on.
If (\ref{D1}) fails for $D$, then $(\varphi -
r')\widehat{D} \subseteq G$ for some $r' \in R$, hence $v(r-r')
\in G$. By construction there is $q_n$ and elements
$g_{\alpha} (\alpha \in I)$ for some finite index set $I$ such
that
\begin{eqnarray} \label{E0}
q_nv(r-r')=\sum\limits_{\alpha \in I}r_{\alpha}g_{\alpha} + e
\end{eqnarray} for some $r_{\alpha} \in R$ and $e \in E$. Now recall that by
construction all $g_{\alpha}$ are branch-like elements coming from
branches $v_{\alpha}$ that are not constant. Hence for every
$\alpha \in I$ there is some $m_{\alpha}$ such that $v
\restriction m_{\alpha} \not= v_{\alpha}\restriction m_{\alpha}$.
Let $m$ be the maximum of all these $m_{\alpha}$. Restricting the
equation (\ref{E0}) above to $v \restriction m$ it then follows
that $q_n(r-r') v \restriction m=0$ and hence $r=r'$. We derive
the contradiction $(\varphi - r)x = 0$ and (\ref{D1}) follows.

We may assume that
\begin{eqnarray} \label{D2}
 D \mbox{ in (\ref{D1}) also satisfies }
(q_n\varphi - r)\widehat{D} \not\subseteq G \mbox{ for all } n >
0, \ r \in R \setminus q_n R
\end{eqnarray}

Suppose (\ref{D2}) fails for $\psi = q_n\varphi - r$. Now we
choose elements $\sigma_m \in T$ of length $m$ which constitute an
`anti-branch', that is, two $\sigma_m$'s are incomparable in $T$.
Moreover we require
$$\sup_{m \leq k} || \psi(\sigma_m)|| < ||\sigma_{k+1}|| \mbox{ and let }
t=t_n := \sum_{m \in \omega} q_{m}q_n^m\sigma_m.$$ Then, for every
non-zero $k < \omega$, we have
$$\psi (t) \equiv \sum_{m \leq
k}q_mq_n^m \psi (\sigma_m) \mbox{ mod } q_{k}q_n^{k+1}\hB,$$ hence
$$\psi (t) \restr \sigma_k \equiv - q_{k}q_n^kr \mbox{ mod }
q_{k}q_n^{k+1} R$$ which cannot be $0$ because $r \not\in q_n R$.
Hence $||\psi (t)|| = \sup ||\sigma_k||$ and the element $\psi
(t)$ cannot be in $G$ by a support argument. Now we enlarge $D$ such that all
the $\sigma_m$'s belong to $D$ and this failure for $(r,q_n)$ is
impossible for the enlarged $D$. Similarly we deal with the other
(at most countably many) potential failures of (\ref{D2}) and
correct $D$, hence (\ref{D2}) holds.

We are now ready to find the desired element $x \in \hE \oplus
H'$. Let $h \in H'$. First we choose a new constant branch $w$
with $||D||, ||\varphi (D)|| < ||w||$. If the branch-like element
$x = w + h$ satisfies the lemma, then the proof is finished.
Otherwise $\varphi (x) \in \langle G, x \rangle_*$ and there are
$n \in \omega, r \in R$ such that
$$ q_n  \varphi (w+h)  - r (w+h)\in G.$$
If $n=0$ we can apply (\ref{D1}) directly, but if $n > 0$, we may
assume that  $r \in R \setminus q_n R$ and (\ref{D2}) applies as
well. (Note that $G$ is pure in $\hB$ and the $\bS$-topology is
Hausdorff.) There is a $d \in \widehat{D}$ such that $(q_n\varphi
- r)d \not\in G$. Now it is easy to check
 by support arguments that $x = w + (d+h)$ meets the requirements of the
lemma. \qed

\begin{lemma}
\label{killing-two} Let $G$ be the module constructed so far with
\begin{enumerate}
\item $E \subseteq_* G \subseteq_* \hB$
\item $\pi(G):=H' \subseteq H$.
\end{enumerate}
Let $\varphi \in \Hom(G,H)$ such that $\varphi \restriction E
\not= 0$. For any $0 \not= h \in H'$ there is an $x' \in \hE$ such
that $x=x' + h$ satisfies
$$ \varphi (x) \not\in H.$$ Moreover, the module $G'=\left< G, x
\right>_*$ is cotorsion-free, $\aleph_0$-free and $\pi(G')
\subseteq H$.
\end{lemma}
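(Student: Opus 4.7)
Following the blueprint of Lemma \ref{killing-one}, the plan is to replace the target condition $\varphi(x) \notin \langle G, x\rangle_*$ by the simpler condition $\hv(x) \notin H$, where $\hv\colon \hE \oplus H' \to \hB$ is the unique continuous extension of $\varphi$ provided by Lemma \ref{extension}. This is the right condition to arrange: any hypothetical $\tilde\varphi \in \Hom(G',H)$ extending $\varphi$ to $G' = \langle G, x\rangle_*$ would, by Lemma \ref{extension} applied to $\tilde\varphi$ (note $\pi(G') = H'$), admit a unique continuous extension $\hE \oplus H' \to \hB$; this extension must coincide with $\hv$ by uniqueness, and so $\tilde\varphi(x) = \hv(x)$ would be forced to lie outside $H$, a contradiction.

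The crucial step is to produce an element $a \in \hE$ with $\hv(a) \notin H$. I would argue by contradiction: assume $\hv(\hE) \subseteq H$. Then for every basis element $\tau \in T$ and every $\hat r \in \hat R$, continuity gives $\hv(\hat r\,\tau) = \hat r\,\varphi(\tau)$, so the assignment $\hat r \mapsto \hat r\,\varphi(\tau)$ is a well-defined $R$-linear map $\hat R \to H$. Cotorsion-freeness of $H$ forces this map to vanish, so $\varphi(\tau) = 0$ for every $\tau \in T$, contradicting $\varphi\restr E \ne 0$. Hence $a$ exists. Pick a countable $C \subseteq T$ with $[a] \subseteq C$ (so in particular $C \subseteq [G]$ since $E \subseteq G$), and put $D = \langle C\rangle$, so $a \in \widehat D$ and $\hv(a) \notin H$.

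Now construct $x$. Pick a fresh ordinal $\eta \in \lambda$ with $||\eta||$ greater than $||D||$ and all other ``old'' norms in play, and let $w$ be the branch-element along the constant branch at $\eta$. If $\hv(w+h) = \hv(w) + \varphi(h) \notin H$, set $x = w+h$. Otherwise $\hv(w) + \varphi(h) \in H$ and set $x = w+a+h$; then
\[\hv(x) = \bigl(\hv(w)+\varphi(h)\bigr) + \hv(a) \notin H,\]
since the first summand lies in $H$ while $\hv(a)$ does not. In either case $x' := x - h \in \hE$ witnesses the claim.

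It remains to check that $G' = \langle G, x\rangle_*$ is cotorsion-free, $\aleph_0$-free, and $\pi(G') \subseteq H$. The last is immediate from $\pi(x) = h \in H'$ together with $\pi(G) = H'$. The other two follow from the standard branch-support argument used throughout such Black Box constructions (and parallel to the corresponding step in Lemma \ref{killing-one}): the top of $[x]$ lies on the fresh constant branch $w$, whose norm dominates every support previously in use, so the pure closure adjoins essentially an independent branch-like summand to the already $\aleph_0$-free and cotorsion-free module $G$. I expect the chief technical nuisance to be this bookkeeping of norms and supports rather than any genuinely new idea; the one substantive ingredient new to this lemma (compared to Lemma \ref{killing-one}) is the appeal to cotorsion-freeness of $H$ in place of that of $G$ when locating the auxiliary element $a$.
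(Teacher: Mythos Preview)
Your proposal is correct and follows essentially the same route as the paper: extend $\varphi$ to $\hv$ via Lemma~\ref{extension}, invoke cotorsion-freeness of $H$ to produce an element of $\hE$ whose image under $\hv$ lies outside $H$ (you call it $a$; the paper writes it as $\gamma e$ with $e\in E$ and $\gamma\in\Rhat$), then test the candidate $x=w+h$ and fall back to $x=w+a+h$ if the first candidate already lands in $H$. One minor notational slip: you write $\varphi(h)$, but $h\in H'$ need not lie in $G=\dom(\varphi)$; read $\hv(h)$ there instead.
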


\proof As in Lemma \ref{killing-one} $\varphi$ has a unique
extension (again denoted by $\varphi$) to $\varphi: \hE \oplus H'
\rightarrow \hB$. Since $H$ is cotorsion-free the following is
obvious: There exists a countable subset
\begin{eqnarray} \label{D3}
 C \subseteq T
\mbox{ such that } D = \langle C \rangle \mbox{ satisfies }
\varphi(\widehat{D}) \not\subseteq H.
\end{eqnarray}

We find the desired element $x \in \hE \oplus H'$. Let $h \in H'$.
First we choose a new constant branch $w$ with $||D||, ||\varphi
(D)|| < ||w||$. If the branch-like element $x = w + h$ satisfies
the lemma, then the proof is finished. Otherwise $\varphi (x) \in
H$. But $\varphi \restriction E \not= 0$ and $H$ is
cotorsion-free, hence there is $e \in E$ such that $\varphi
(\widehat{R} e) \not\subseteq H$. Choose $\gamma \in \widehat{R}$
such that $\varphi (\gamma e) \not\in H$. Now it is obvious that
$x = w + (\gamma e+h)$ meets the requirements of the
lemma. \qed\\

We now  continue the construction of $G$ and describe the two tasks
depending on the traps $(\varphi_\alpha, f_\alpha)$ of the Black Box
in order to control endomorphisms of $G$ and homomorphisms $G\to H$:\\

(I) \ First we consider the following `bad case' for $\alpha$:\\
{\it If there is an $x \in \widehat{\dom(\varphi_\alpha)}$ such
that $||x|| < ||\dom(\varphi_\alpha)|| \in S_1 \cup S_2$ and
either
$$\varphi_\alpha (x) \not\in \langle G_\alpha , x \rangle_*
\mbox{\rm \quad ( if $||\dom(\varphi_\alpha)|| \in S_1$) \quad
or}$$
$$ \varphi_\alpha (x) \not\in H \mbox{ \rm (if $||\dom(\varphi_\alpha)|| \in S_2$)}$$
then choose a branch $v \in \Br(\Im f_\alpha)$ and put $g_\alpha =
v$ or $g_\alpha = x + v$; the choice of $g_\alpha$ depends on the
requirement that either}
$$  \varphi_\alpha (g_\alpha) \not\in \langle G_\alpha , g_\alpha \rangle_*
= G_{\alpha +1} \mbox{ ( \rm if
$||\dom(\varphi_\alpha)|| \in S_1$) \quad or }$$ $$ \varphi_\alpha
(g_\alpha) \not\in H \mbox{ \rm\quad (if $||\dom(\varphi_\alpha)||
\in S_2$)}.$$ {\it If $\alpha$ is not bad, then choose any
branch-like
$g_\alpha$ taking care of (II).}\\

(II) \  {\it If $\beta < \alpha$ was bad before and $
\varphi_\beta (g_\beta)  \not\in G_{\beta + 1}$ then we still want
$ \varphi_\beta
(g_\beta) \not\in G_{\alpha + 1}$.}\\

We first show

\begin{lemma}
There is a choice of $g_{\alpha}'s$ such that the two tasks (I)
and (II) are satisfied. \end{lemma}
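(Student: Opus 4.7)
The plan is to define the elements $g_\alpha$ by transfinite recursion on $\alpha\in\lambda^*$, handling both tasks at each stage. At a successor $\alpha$, I first invoke Lemma~\ref{extension} to extend the trap $\varphi_\alpha$ uniquely to a homomorphism $\hat E\oplus\pi(\dom\varphi_\alpha)\to\hat B$, which I continue to call $\varphi_\alpha$. Whether $\alpha$ is bad is decided by whether a step-lemma applies: Lemma~\ref{killing-one} if $||\dom\varphi_\alpha||\in S_1$ and $\varphi_\alpha\restr E\notin R$, or Lemma~\ref{killing-two} if $||\dom\varphi_\alpha||\in S_2$ and $\varphi_\alpha\restr E\neq 0$. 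In the bad case these lemmas produce a witness $x=x'+h\in\hat E\oplus\pi(\dom\varphi_\alpha)$ with $||x'||<||\dom\varphi_\alpha||$ satisfying the required escape clause of Task~(I); I then pick a branch $v\in\Br(\Im f_\alpha)$ and set $g_\alpha:=x+v$. In the non-bad case I simply set $g_\alpha:=v$, or any branch-like $b+v$ with $||b||<||v||$. Clause~(a) of Theorem~\ref{BB} ensures $||v||>||x'||$, so $g_\alpha$ is branch-like and the escape property for Task~(I) is preserved for any $v$ of sufficiently large norm.

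The delicate part is Task~(II). Suppose for contradiction that $\varphi_\beta(g_\beta)\in G_{\alpha+1}=\langle G_\alpha,g_\alpha\rangle_*$ for some bad $\beta<\alpha$. Then there are $n\in\omega$, $r\in R$, $y\in G_\alpha$ with $q_n\varphi_\beta(g_\beta)=y+rg_\alpha$ in $\hat B$. If $r=0$, purity and torsion-freeness of $\hat B$ force $\varphi_\beta(g_\beta)\in G_\alpha$, contradicting the inductive hypothesis that Task~(II) was honored at every prior stage between $\beta$ and $\alpha$. Hence $r\neq 0$, and equating supports in $\hat B$ shows that the cofinal high-norm tail of the branch $v$ buried inside $g_\alpha$ must be covered by $[\varphi_\beta(g_\beta)]\cup[G_\alpha]$, a genuine restriction on the choice of $v$.

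The main obstacle is showing that this restriction excludes fewer than $|\Br(\Im f_\alpha)|=2^{\aleph_0}$ branches, so that a good $v$ still exists. For each bad $\beta<\alpha$, clause~(b) of Theorem~\ref{BB} yields $\Br(\Im f_\gamma)\cap\Br(\Im f_\alpha)=\emptyset$ for every $\gamma<\alpha$, so the branches used when building $G_\alpha$ are pairwise almost disjoint from any $v\in\Br(\Im f_\alpha)$; clause~(c) further gives $\Br(\Im f_\alpha)\cap\Br(\dom\varphi_\beta)=\emptyset$ whenever $\beta+2^{\aleph_0}\leq\alpha$, which restricts the $[\varphi_\beta(g_\beta)]$-cover to a cocountable set of branches per such $\beta$. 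The remaining stages $\beta$ with $\beta+2^{\aleph_0}>\alpha$ number at most $2^{\aleph_0}$, and each excludes only countably many forbidden branches, so altogether fewer than $2^{\aleph_0}$ branches are excluded. A valid $v$ exists and the recursion continues. The trickiest part of the verification, which I would defer to a detailed support analysis, is justifying the countable bound on problematic $v$'s per $\beta$, using the almost-disjointness of $\Br(\Im f_\alpha)$ as a family of infinite branches in the tree $T$ together with the inductive control on $[G_\alpha]$.
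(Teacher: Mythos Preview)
Your outline for Task~(I) is broadly right, but the argument for Task~(II) has two gaps that together make the counting fail.

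First, the arithmetic. You say the stages $\beta<\alpha$ with $\beta+2^{\aleph_0}>\alpha$ number ``at most $2^{\aleph_0}$'' and, with countably many excluded branches each, conclude that ``fewer than $2^{\aleph_0}$'' are excluded. But $2^{\aleph_0}\cdot\aleph_0=2^{\aleph_0}$, and $|\Br(\Im f_\alpha)|=2^{\aleph_0}$ exactly, so nothing survives. What is actually needed is the \emph{strict} bound: the relevant $\beta$'s lie in an interval $[\beta_0,\alpha)$ of ordinal type strictly below the initial ordinal $2^{\aleph_0}$ (because $\alpha<\beta_0+2^{\aleph_0}$), hence of cardinality $<2^{\aleph_0}$.

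Second, and more seriously, your per-$\beta$ bound is both too weak and unjustified. You claim each $\beta$ excludes ``countably many'' branches and defer this to a support analysis based on almost-disjointness; but a countable subtree of $T$ can carry $2^{\aleph_0}$ branches, and clause~(c) concerns $\Br(\dom\varphi_\beta)$, not the support of $\varphi_\beta(g_\beta)$, which lies in the \emph{image} of $\varphi_\beta$ and may sit anywhere in $\hat B$. The paper's mechanism is different and is the idea you are missing: if two distinct branches $v\neq w$ were both excluded by the same $\beta$, one has relations
\[
q_{k_v}\varphi_\beta(g_\beta)-a_v(x+v)\in G_\alpha
\quad\text{and}\quad
q_{k_w}\varphi_\beta(g_\beta)-a_w(x+w)\in G_\alpha.
\]
Subtracting (after scaling by $q_{k_v}/q_{k_w}$) eliminates $\varphi_\beta(g_\beta)$ entirely and leaves
\[
a_v(x+v)-\frac{q_{k_v}}{q_{k_w}}\,a_w(x+w)\in G_\alpha.
\]
Since $v,w\in\Br(\Im f_\alpha)$ are new branches, disjoint from every $\Br(\Im f_\gamma)$ with $\gamma<\alpha$ by clause~(b), a support argument on this element forces $v=w$, a contradiction. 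Thus each $\beta$ excludes at most \emph{one} branch; combined with the strict cardinality bound on the set of relevant $\beta$'s, fewer than $2^{\aleph_0}$ branches are excluded in total, and a good $v$ remains. This subtraction step --- cancelling the unknown $\varphi_\beta(g_\beta)$ so that only the new branches are compared against $G_\alpha$ --- is the crux you have not supplied.
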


\proof The work on condition (I) is put into Lemma \ref{killing-one}
and Lemma \ref{killing-two}, hence (II) must be verified. We apply a
short argument from \cite[p. 457]{CG} and restrict ourselves to the
case of endomorphisms of $G$ (i.e. $||\dom(\varphi_\alpha)|| \in S_1)$. The
case of homomorphisms into $H$ (if $||\dom(\varphi_\alpha)|| \in
S_2$) is similar but easier and therefore left to the reader. When
defining $g_\alpha =g_{\alpha,v} = x + v$ we have a free choice of
branches $v \in \Br(\Im f_\alpha)$ which we now use. If (II) is
violated for some $\beta = \beta_v$, then
\begin{eqnarray} \label{dist}
\varphi_\beta (g_\beta)  \in \langle G_\alpha, g_{\alpha,v}
\rangle_*, \mbox{ hence } \varphi_{\beta_v} (q_{k_v} g_{\beta_v})
- g_{\alpha,v} a_v \in G_\alpha.
\end{eqnarray}

A support argument and (c) from the Black Box show that
$$\beta_v < \alpha < \beta_v + 2^{\aleph_0}$$
and if $\beta_0$ is the least ordinal satisfying this inequality,
then, for all $v\in Br(\Im f_\alpha)$,
$$\beta_0 < \beta_v < \beta_0 + 2^{\aleph_0}.$$
By cardinalities, there are two distinct branches $v,w \in \Br(\Im f_\alpha)$ such
that $\beta_v = \beta_w $. Suppose $k_v \geq k_w$. Subtracting the
according expressions (\ref{dist}) we get
$$ r_v g_{\alpha,v} - \frac{q_{k_v}}{q_{k_w}} g_{\alpha,w} r_w \in G_\alpha$$
and by an easy support argument it can be seen that this is only
possible for $v = w$, a contradiction. Hence (II) can be arranged
for $g_\alpha = x + v$ and some $v \in \Br (\Im f_\alpha)$. \qed\\

We claim that the two tasks suffice to show the statement
of Theorem \ref{main2} and verify the conditions mentioned there:\\
First note that $K =\Ker(\pi)$, and $G/K$ is reduced. Consider any
$\varphi \in \End G$ and assume that $\varphi \restriction E
\not\in R$.

By the Lemma \ref{killing-one} there is an element $x \in \hE
\oplus H$ such that $\varphi (x) \not\in G$ and by the Black Box
we can find an $\alpha \in \lambda^*$ such that $\varphi$ extends
$\varphi_\alpha$, $x \in \widehat{\dom \varphi_\alpha}$ and $||x||
< ||\dom \varphi_\alpha||$. Hence $\alpha$ is a bad case and
$g_\alpha$ in the construction must satisfy (I) and (II). It
follows by task (I) that $ \varphi_\alpha (g_\alpha) = \varphi
(g_\alpha) \not\in G_{\alpha + 1}$. By task (II) we also have
$\varphi (g_\alpha) \not\in G_\gamma$ for any later ordinal
$\alpha < \gamma \in \lambda^*$, hence $\varphi (g_\alpha) \not\in
G$ and $\varphi$ was not an endomorphism of $G$, a contradiction.
Thus $\varphi\restriction E = r \cdot \id$ for some $r \in R$.
Thus (ii) of the theorem  holds and (iii) follows with the help of
(ii): We consider the map $\va_r= \va - r$. Since $G$ is reduced
it follows that $\va_r(K)=0$, so $\va_r$ induces a
$\tilde{\varphi}: H=G/K \to G$ which must be zero by the
hypothesis of the theorem in (iii). Thus  $\End(G)=R$ which
implies $\Hom(G,K)=0$.

Now we determine any $\varphi \in \Hom(G,H)$ and suppose $\varphi
\restriction E \ne 0$. Then by Lemma \ref{killing-two} there is
some $x \in \hE \oplus H$ such that $\varphi (x) \not\in H$.
Again, by the Black Box we can find an $\alpha \in \lambda^*$ such
that $\varphi$ extends $\varphi_\alpha$, $x \in \widehat{\dom
\varphi_\alpha}$ and $||x|| < ||\dom \varphi_\alpha||$. Hence
$\alpha$ is a bad case and $g_\alpha$ in the construction must
satisfy (I). It follows that $ \varphi_\alpha (g_\alpha) = \varphi
(g_\alpha) \not\in H$ - a contradiction. Thus $\varphi
\restriction E = 0$ and there is an induced homomorphism
$\tilde{\varphi} : H=G/K \to H$ which has to be multiplication by
some $r \in R$ by the assumptions on $H$. Thus $\varphi \in \pi R$
as required.

The mapping conditions for the cellular cover $0\arr K\arr G\arr
H\arr 0$ are obvious. Thus Theorem \ref{main2} holds. \qed

\noindent R\"udiger G\"obel, Lutz Str\"ungmann \\
Department of Mathematics,\\ University of Duisburg-Essen,\\
Campus Essen, 45117 Essen, Germany\\
{\small e-mail: ruediger.goebel@uni-due.de}\\
{\small e-mail: lutz.struengmann@uni-due.de}\\
\\
Jos\'e L. Rodr\'{\i}guez\\
\'Area de Geometr\'{\i}a y Topolog\'{\i}a,\\
Facultad de Ciencias Experimentales,\\ University of Almer\'{\i}a,\\
La ca{\~n}ada de San Urbano, 04120 Almer\'{\i}a, Spain\\
{\small e-mail: jlrodri@ual.es}\\

\end{document}